\newcommand{\fX}{\mathfrak X}
\newcommand{\doo}{\diamondsuit_{\aleph_1}}
\DeclareMathOperator{\SEQ}{\{0,1\}^{<\bbN}}
\DeclareMathOperator{\xSEQ}{\{0,1\}^{\bbN}}
\newcommand{\NULL}{\langle\rangle}
\newcommand{\cS}{{\mathcal S}}
\newcommand{\cO}{{\mathcal O}}
\newcommand{\cstar}{$C^*$}
\newcommand{\N}{{\mathbb N}}
\newcommand{\bbR}{{\mathbb R}}
\newcommand{\bbN}{{\mathbb N}}
\newcommand{\bbC}{\mathbb C}
\newcommand{\bbQ}{\mathbb Q}
\newcommand{\bbX}{{\mathbb  X}}
\newcommand{\cX}{{\mathcal X}}
\newcommand{\cW}{{\mathcal W}}
\newcommand{\cY}{{\mathcal Y}}
\newcommand{\cZ}{{\mathcal W}}
\newcommand{\cA}{{\mathbb A}}
\newcommand{\rs}[1]{\restriction_{#1}}
\DeclareMathOperator{\id}{id}
\newcommand{\cC}{\mathcal C}
\newcommand{\cF}{\mathcal F}
\newcommand{\e}{\varepsilon}
\newtheorem{thm}{Theorem}[section]
\newtheorem{theorem}[thm]{Theorem}
\newtheorem{remark}[thm]{Remark}
\newtheorem{lemma}[thm]{Lemma}
\theoremstyle{definition}
\newtheorem{definition}[thm]{Definition}
\newcounter{my_enumerate_counter}
\newcommand{\pushcounter}{\setcounter{my_enumerate_counter}{\value{enumi}}}
\newcommand{\popcounter}{\setcounter{enumi}{\value{my_enumerate_counter}}}
\def\rs{\restriction}
\newcommand{\cP}{{\mathcal P}}
\newcommand{\bbZ}{\mathbb Z}
\DeclareMathOperator{\Ad}{Ad}
\DeclareMathOperator{\Tr}{Tr}
\title{A simple AF algebra not isomorphic to its opposite}
\author{Ilijas Farah} 
\address{Department of Mathematics and Statistics\\
York University\\
4700 Keele Street\\
North York, Ontario\\ Canada, M3J 1P3}
\email{ifarah@yorku.ca}
\urladdr{http://www.math.yorku.ca/$\sim$ifarah}
\author{Ilan Hirshberg} 
\address{Department of Mathematics\\
 Ben Gurion University of the Negev\\
  P.O.B. 653, Be'er\\
Sheva 84105, Israel}
\email{ilan@math.bgu.ac.il}
\urladdr{http://www.math.bgu.ac.il/~ilan/}
\dedicatory{Dedicated to Menachem Magidor on the occasion of his 70th birthday.} 
\date{\today}
\thanks{This research was supported by the Israel Science Foundation, grant number 476/16 (IH) and NSERC (IF). Most of the work on this paper was done when IH was visiting IF at the Fields Institute in September 2016.}
\date{\today}
\begin{document}

\begin{abstract}
We show that it is consistent with ZFC that there is a simple nuclear non-separable $C^*$-algebra which is not isomorphic to its opposite algebra. We can furthermore guarantee that this example is an inductive limit of unital 
copies of the Cuntz algebra $\cO_2$, or of the CAR algebra.
\end{abstract}
\maketitle
\subsection*{Significance statement}
The Hilbert space $\ell^2$ is the (usually infinite-dimensional) modification of our standard three-dimensional  space. \cstar-algebras are suitably closed algebras of linear operators on $\ell^2$. The algebras of complex $n\times n$  matrices are the simplest examples of \cstar-algebras. The opposite of a \cstar-algebra is the algebra in which the direction of the multiplication is reversed. 
Although every matrix algebra is isomorphic to its opposite,  we construct an inductive limit of matrix algebras not isomorphic to its opposite. This is the first known example of a simple amenable \cstar-algebra not isomorphic to its opposite. Our examples can have exactly $n$ inequivalent irreducible representations for any $n$, showing that Glimm's dichotomy can fail for simple nonseparable \cstar-algebras.

\section{Introduction} 
%
The opposite algebra of a \cstar-algebra $A$ is the \cstar-algebra whose underlying Banach space structure and involution are the same as that of $A$, but  the product of $x$ and $y$ is defined as $yx$ rather than $xy$.
It is  denoted by $A^{\mathrm{op}}$.  
 In~\cite{connes} Connes constructed 
 examples of factors not isomorphic to their opposites. Phillips used Connes' results in \cite{phillips-PAMS} to construct simple separable examples, and Phillips--Viola in \cite{phillips-viola} improved this to construct a simple separable exact example. In the nuclear setting, one can construct non-simple examples (\cite{rosenberg,phillips-cts-trace}), however the simple nuclear case remained open both in the separable and in the non-separable settings.

The separable case remains a difficult open problem. AF algebras are necessarily isomorphic to their opposites, due to Elliott's classification theorem, and our results show that this cannot be recast as a result purely of a local approximation property. There has been major progress in the Elliott classification program recently, but the state-of-the art classification theorems all assume the UCT. Notably, we do not know if there are Kirchberg algebras which are not isomorphic to their opposites. If such an algebra exists, then it would necessarily be a counterexample to the UCT. More generally, both the Elliott invariant and the Cuntz semigroup of any \cstar-algebra $A$ are isomorphic to that of $A^{\textrm{op}}$.


The additional axiom we add to ZFC is Jensen's $\diamondsuit_{\aleph_1}$, discussed below in Section~\ref{section:diamond}, and our construction is motivated by the work of Akemann and Weaver from \cite{AkeWe:Consistency}, where they use  $\diamondsuit_{\aleph_1}$ to construct a counterexample to the Naimark problem. Our main theorem is:

\begin{theorem} \label{T.O2} Assume $\diamondsuit_{\aleph_1}$ holds. 
Then there exists a nuclear, simple, unital \cstar-algebra $A$ not isomorphic to its opposite
algebra. 
\end{theorem}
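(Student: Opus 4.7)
The plan is to build $A$ as a transfinite inductive limit $A=\bigcup_{\alpha<\omega_1}A_\alpha$ of separable, unital, simple, nuclear subalgebras with unital injective connecting maps. For the first version of the theorem I take each $A_\alpha\cong\cO_2$; for the CAR-version the construction is parallel. Simplicity, unitality, and nuclearity all pass to the inductive limit. Since a $*$-isomorphism $A\to A^{\mathrm{op}}$ is the same as a $*$-antiautomorphism of $A$, the goal reduces to ensuring $A$ admits no $*$-antiautomorphism. The overall strategy mirrors the Akemann--Weaver construction in \cite{AkeWe:Consistency}: $\doo$ is used to anticipate countable approximations to a pathological global object (there, a pure state outside the distinguished orbit; here, a $*$-antiautomorphism) and each stage of the construction kills the approximation guessed by the diamond.

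Fix a bookkeeping identifying each $A_\alpha$ with a subset of $\alpha$ (via enumerations of generators and of polynomials in them), and let $\langle D_\alpha:\alpha<\omega_1\rangle$ be a $\doo$-sequence. The chain $(A_\alpha)$ is defined recursively, with unions at limits. At a successor $\alpha+1$, if $D_\alpha$ codes a $*$-antiautomorphism $\varphi_\alpha\colon A_\alpha\to A_\alpha$, I pick the embedding $A_\alpha\hookrightarrow A_{\alpha+1}$ so as to kill $\varphi_\alpha$: I adjoin generators of $A_{\alpha+1}$ whose algebraic relations with $A_\alpha$, chosen using a preferred pure state on $A_\alpha$ and its $\varphi_\alpha$-twist, cannot be respected by any antiautomorphism of $A_{\alpha+1}$ extending $\varphi_\alpha$. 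If $D_\alpha$ does not encode such data, I take any unital $\cO_2$-extension. The reflection step then finishes the argument: if $\Phi\colon A\to A$ were a $*$-antiautomorphism, then $C=\{\alpha<\omega_1:\Phi[A_\alpha]=A_\alpha\}$ is a club, and by $\doo$ the restriction $\Phi\rs{A_\alpha}$ is caught by $D_\alpha$ on a stationary subset of $C$; at any such $\alpha$ the construction rules out any $*$-antiautomorphism of $A_{\alpha+1}$ extending $\Phi\rs{A_\alpha}$, contradicting the existence of $\Phi\rs{A_{\alpha+1}}$.

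The main obstacle is the successor step: given a separable copy of $\cO_2$ equipped with a designated $*$-antiautomorphism $\varphi$, I must embed it unitally into another copy of $\cO_2$ in a way that provably blocks every $*$-antiautomorphic extension of $\varphi$. The natural device is pure-state surgery, similar to the excision-and-amalgamation used in \cite{AkeWe:Consistency}, but adapted so that the surgery distinguishes an orientation rather than separating inequivalent irreducible representations. Carrying this out requires locating, inside $A_\alpha$, a pure state $\omega$ such that $\omega$ and $\omega\circ\varphi$ have GNS data that can be driven to unitary inequivalence by the amalgamation, and the asymmetry so introduced must survive all later stages of the construction. This local obstruction lemma, rather than the diamond bookkeeping, is where the delicate work lies, and is likely where genuine use must be made of \cstar-algebraic rigidity results in the style of Connes and Phillips--Viola.
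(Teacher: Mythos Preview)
Your outline is essentially the paper's, and you correctly isolate the successor step as the crux. There is, however, a genuine gap in your reflection argument and in the mechanism you propose for the obstruction.

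In your second paragraph you conclude by saying the construction ``rules out any $*$-antiautomorphism of $A_{\alpha+1}$ extending $\Phi\rs{A_\alpha}$, contradicting the existence of $\Phi\rs{A_{\alpha+1}}$.'' But $\Phi\rs{A_{\alpha+1}}$ is only an anti\-homomorphism of $A_{\alpha+1}$ into $A$; nothing guarantees $\Phi[A_{\alpha+1}]=A_{\alpha+1}$, since the club $C=\{\alpha:\Phi[A_\alpha]=A_\alpha\}$ may well consist entirely of limit ordinals. So killing extensions to $A_{\alpha+1}$ alone does not yield a contradiction. You acknowledge later that the asymmetry ``must survive all later stages,'' but you give no mechanism for this, and without one the argument does not close.

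The paper's device is to carry along, through the entire construction, a fixed countable family $\cY$ of pure states, each of which is forced at every stage to have a \emph{unique} state extension. At the successor step one first uses a Kishimoto-type result (Lemma~\ref{lemma:uncountable-inequivalent-states}) to find a pure state $\psi_0$ such that $\psi_0$ and $\psi_1:=\psi_0\circ\alpha$ are already unitarily inequivalent and inequivalent to everything in $\cY$; one does not ``drive them to inequivalence'' by the amalgamation. One then passes to a crossed product (Lemmas~\ref{L.crossed} and~\ref{L.UHF.crossed}) in which every state in $\cY$ and $\psi_0$ extend uniquely, the unique extension of $\psi_0$ is made equivalent to that of some $\varphi\in\cY$, while $\psi_1$ is deliberately given \emph{multiple} pure-state extensions. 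The key observation (Lemma~\ref{L.onestep}\eqref{I.onestep.4}) is that this asymmetry persists to any overalgebra $D$ in which the distinguished $\varphi\in\cY$ still extends uniquely: if $\alpha$ extended to $D$, then $\psi_1=\psi_0\circ\alpha$ would also extend uniquely, contradiction. Since the construction maintains unique extension of every $\varphi\in\cY$ all the way up to $A$, one gets the contradiction directly with $D=A$, bypassing any need for $\alpha+1\in C$. This bookkeeping of a protected family of uniquely-extending pure states is the missing ingredient in your sketch.
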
 

In fact, we obtain the following strengthening. 

\begin{theorem} \label{T.O2+} Assume $\diamondsuit_{\aleph_1}$ holds
and $1\leq n\leq \aleph_0$ is given. 
Then there exists a \cstar-algebra $A$ with the following properties. 
\begin{enumerate}
\item $A$ is nuclear, simple, unital and of density character $\aleph_1$. 
\item $A$ is not isomorphic to its opposite
algebra. 
\item \label{I.O2+.3} $A$ has exactly $n$ unitarily nonequivalent irreducible representations. 
\item All automorphisms of $A$ are inner. 
\pushcounter
\end{enumerate}
In addition, one  can ensure that one of the following holds. 
\begin{enumerate}
\popcounter\item $A$ is an inductive limit of subalgebras isomorphic to the Cuntz algebra 
$\cO_2$. 
\item $A$ is an inductive limit of subalgebras isomorphic to 
full matrix algebras of the form $M_{2^n}(\bbC)$. 
\end{enumerate}
\end{theorem}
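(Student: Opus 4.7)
The plan is to build $A$ as a continuous $\subseteq$-increasing union $A = \overline{\bigcup_{\alpha<\omega_1} A_\alpha}$ of unital separable nuclear subalgebras. For clause~(5) each $A_\alpha$ is itself a copy of $\cO_2$ with unital connecting inclusions; for clause~(6) each $A_\alpha$ is a UHF algebra of type $2^\infty$, built internally as a union of matrix subalgebras $M_{2^n}(\bbC)$. In either case $A$ is simple, unital, nuclear and of density character $\aleph_1$. At the bottom level I would fix pure states $\sigma_1,\ldots,\sigma_n$ on $A_0$ with pairwise unitarily inequivalent GNS representations $\pi_1,\ldots,\pi_n$, and track canonical pure-state extensions $\sigma_i^{\alpha}$ of each $\sigma_i$ to $A_\alpha$ along the recursion.

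\textbf{Diamond bookkeeping.} Via a bijective coding between $\alpha$ and a countable norm-dense subset of $A_\alpha$, a $\doo$-sequence produces guesses $G_\alpha$ which, on a stationary set of $\alpha$, correctly code the restriction to $A_\alpha$ of any preassigned object on $A$: a pure state $\tau$, an automorphism $\psi$, or an anti-automorphism $\phi$ (equivalently, an isomorphism $A\to A^{\mathrm{op}}$). At each successor step I would extend $A_\alpha$ to $A_{\alpha+1}$, inside the prescribed local class, so as to simultaneously (i)~prevent $\tau$ from extending to a pure state of $A_{\alpha+1}$ unless it was already equivalent to some $\sigma_i^\alpha$; (ii)~prevent $\psi$ from extending to an automorphism of $A_{\alpha+1}$ unless it is already implemented by a unitary of $A_\alpha$; and (iii)~prevent $\phi$ from extending to an anti-automorphism of $A_{\alpha+1}$. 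Since every such global object on the eventual $A$ is correctly guessed at some stage, this kills every extraneous unitary class of irreducible representations, every outer automorphism, and every isomorphism with $A^{\mathrm{op}}$, yielding clauses (2)--(4). The uniqueness of pure extensions of the $\sigma_i$, maintained as an invariant, supplies the ``exactly $n$'' half of clause (3); nuclearity and simplicity transfer to the limit since connecting maps are unital.

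\textbf{The obstacle.} The core technical point is a single successor lemma realizing (i)--(iii) simultaneously while staying inside the prescribed local class. My plan is to adapt the Akemann--Weaver excision machinery from~\cite{AkeWe:Consistency}: apply Akemann--Anderson--Pedersen excision of pure states to obtain contractions witnessing the inequivalence of $\tau$ from each $\sigma_i^\alpha$ and the outerness of $\psi$ and $\phi$, then enlarge $A_\alpha$ by a carefully chosen sub-copy of $\cO_2$ (respectively a matrix amplification $M_{2^k}$) whose generators interlace these excising contractions in a way incompatible with any $\ast$-homomorphic or anti-$\ast$-homomorphic extension of $\tau$, $\psi$, or $\phi$. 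The absorption identities $\cO_2 \otimes \cO_2 \simeq \cO_2$ and $M_{2^k}\otimes \mathrm{UHF}_{2^\infty} \simeq \mathrm{UHF}_{2^\infty}$ keep us inside the correct class throughout.

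\textbf{Main difficulty.} The delicate part is simultaneity: killing $\phi$ may create new pure states that collide with previous $\sigma_i^\alpha$, and killing $\psi$ may accidentally make $\phi$ extendible in a reparameterized form. I would perform the three killings sequentially within a single successor step and verify, after each substep, that the earlier killings are preserved. This is where the argument is forced to use specific structural features of $\cO_2$ (or of matrix algebras) that distinguish the product from its reverse on the critical partial isometries: in the UHF case, the matrix product on the newly introduced $M_{2^k}$-block cannot be reproduced by the anti-$\ast$-homomorphism $\phi$ on the chosen excising witnesses, because the transpose-induced reversal would not match the excision data prescribed by the $\sigma_i^\alpha$. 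Once this one-step extension lemma is in place, the $\omega_1$-recursion and the diamond calculus follow the Akemann--Weaver blueprint essentially unchanged.
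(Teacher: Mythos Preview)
Your global architecture matches the paper: a continuous $\omega_1$-chain of separable simple nuclear subalgebras, a tracked tuple of pure states with unique extensions, and $\doo$ to anticipate a pure state together with an outer automorphism or antiautomorphism at a stationary set of stages. But the successor lemma you sketch is where the argument actually lives, and your proposed mechanism is not the one that works. First, item~(i) as stated is impossible: every pure state of $A_\alpha$ extends to a pure state of $A_{\alpha+1}$, so you cannot ``prevent extension''. What the paper does instead is force the guessed $\tau$ to have a \emph{unique} extension to $A_{\alpha+1}$ and to make that unique extension unitarily equivalent to one of the tracked $\sigma_i^{\alpha+1}$; since the $\sigma_i$ continue to have unique extensions all the way to $A$, this forces the global pure state to be equivalent to some $\sigma_i$. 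Second, your plan to adjoin a ``sub-copy of $\cO_2$'' or a matrix block via tensor absorption cannot maintain the invariant that the tracked states have unique extensions: a pure state on $A_\alpha$ has many extensions to $A_\alpha\otimes\cO_2$ or $A_\alpha\otimes M_{2^k}$. The paper instead passes to a crossed product $A_\alpha\rtimes_\beta\bbZ$ (or $\rtimes_\beta\bbZ/2\bbZ$ in the UHF case), using the Akemann--Weaver criterion that a pure state $\varphi$ has unique extension to the crossed product iff $\varphi\not\sim\varphi\circ\beta^n$ for all $n\neq 0$; the automorphism $\beta$ is chosen (via Kishimoto--Ozawa--Sakai) to prescribe exactly which states extend uniquely and which become equivalent.

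The mechanism for killing a guessed $\alpha$ is also different from, and sharper than, what you describe. The paper (Lemma~\ref{L.onestep}) picks, using Lemma~\ref{lemma:uncountable-inequivalent-states}, a pure state $\psi_0$ with $\psi_0\not\sim\psi_1:=\psi_0\circ\alpha$ and with both avoiding the tracked states; then builds the crossed product so that $\psi_0$ has a \emph{unique} extension (merged with a tracked $\sigma_i$) while $\psi_1$ has \emph{multiple} extensions. This asymmetry obstructs any extension of $\alpha$ not just to $A_{\alpha+1}$ but to every $D\supseteq A_{\alpha+1}$ in which the tracked state keeps its unique extension---exactly the persistence you need for the limit. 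There is no interference between the three tasks, because a single crossed product step (Lemmas~\ref{L.crossed} and~\ref{L.UHF.crossed}) lets you specify simultaneously a set $\cX$ of states to acquire multiple extensions, a set $\cY$ of states to keep unique extensions, and an equivalence relation $E$ on $\cY$ dictating which unique extensions become unitarily equivalent; your worry about ``simultaneity'' is thus handled structurally rather than by iterated substeps. Staying in class uses $\cO_2\rtimes_\beta\bbZ\cong\cO_2$ (via the Rokhlin property) and a Bratteli-diagram computation for $M_{2^\infty}\rtimes_\beta\bbZ/2\bbZ$, not tensor absorption.
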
 

By Glimm's theorem (see the remark in the second paragraph from the end of page 586 of \cite{Glimm-type-I}), 
every separable and simple \cstar-algebra with nonequivalent irreducible representations
has $2^{\aleph_0}$ nonequivalent irreducible representations.  
Item \eqref{I.O2+.3} above shows that the failure of   this dichotomy  
for nonseparable \cstar-algebras is relatively consistent with ZFC. 

The observation that the proof of \cite{AkeWe:Consistency} gives a nuclear counterexample to Naimark's problem is due to N. C. Phillips.  
We don't know whether a simple, nuclear \cstar-algebra not isomorphic to its opposite can be constructed in ZFC, and whether a counterexample to Naimark's problem can be constructed in ZFC.  Another problem raised by our proof of Theorem~\ref{T.O2+}  is whether a counterexample to Naimark's 
problem can have an outer automorphism. 

We use the following notation throughout. We count 0 as a natural number. If $\cY=\langle a_j: j\in \bbN\rangle$ is a sequence of elements in some set, we denote by $b^\frown \cY$ the sequence whose first element is $b$, and whose $j+1$ element is $a_j$. 

\section{Extending states}

This section contains technical lemmas which will be used in the induction step of our construction. We first give a modification of a lemma of Kishimoto, Lemma~\ref{L.Kishimoto}, and a toy version, Lemma~\ref{L.0}.

\begin{lemma}
\label{L.0}
Let $A$ be a a non-type I, 
separable, simple, unital  \cstar-algebra. Let $C$ and $D$ be non-zero hereditary subalgebras of $A$, and let $\e>0$. Let $n \geq 1$ and let $u_0,u_1,\ldots u_n$ be some elements in $A^+$. Then there exist positive elements $c \in C$ and $d \in D$ of norm $1$ such that $\|cu_kd\|<\e$ for $k=0,1,\ldots,n$.
\end{lemma}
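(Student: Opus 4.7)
The plan is to combine Kadison's transitivity theorem with the Akemann--Anderson--Pedersen excision theorem for pure states. Since $A$ is simple, both $C$ and $D$ are themselves simple (closed two-sided ideals of $D$ correspond bijectively to those of $A$ via fullness, using that $\overline{ADA}=A$), and being Morita equivalent to the non-type~I algebra $A$ they are in addition non-type~I; in particular every nonzero irreducible representation of $C$ or of $D$ is infinite-dimensional. Fix a pure state $\phi_C$ of $C$ and, using Hahn--Banach together with Krein--Milman, extend it to a pure state $\phi$ of $A$ with $\phi|_C=\phi_C$. Let $\pi_\phi\colon A\to B(H_\phi)$ be the GNS representation (irreducible on an infinite-dimensional $H_\phi$) with cyclic unit vector $\xi_\phi$.

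Let $q_D\in B(H_\phi)$ denote the support projection of $\pi_\phi(D)$, nonzero since $\pi_\phi$ is faithful on the simple algebra $A$ and $D\neq 0$. The compressed map $D\to B(q_DH_\phi)$, $d\mapsto \pi_\phi(d)|_{q_DH_\phi}$, is an irreducible representation of $D$ (this uses Kaplansky density together with Kadison transitivity applied to $\pi_\phi$ to see that the image is SOT-dense in $B(q_DH_\phi)$), so $q_DH_\phi$ must be infinite-dimensional because $D$ is simple and non-type~I. Set $\zeta_k:=\pi_\phi(u_k)\xi_\phi\in H_\phi$ for $k=0,\ldots,n$ and pick a unit vector $\theta\in q_DH_\phi$ orthogonal to each $q_D\zeta_k$; this is possible since finitely many vectors cannot span an infinite-dimensional space.

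Next, apply the positive form of Kadison's transitivity theorem to the irreducible representation $D\to B(q_DH_\phi)$ on the finite-dimensional subspace $V:=\mathrm{span}\{\theta,q_D\zeta_0,\ldots,q_D\zeta_n\}$, with target operator the rank-one projection $P_\theta$ onto $\theta$ inside $V$. This produces $d\in D^+$ with $\|d\|=1$ and $\pi_\phi(d)|_V=P_\theta$; in particular $\pi_\phi(d)q_D\zeta_k=0$ for each $k$. Combined with $\pi_\phi(d)(1-q_D)=0$, which holds because $q_D$ is the support of $\pi_\phi(D)$, we get $\pi_\phi(d)\zeta_k=0$, so, using that each $u_k$ is self-adjoint,
\[
\phi(u_k d^2 u_k)=\|\pi_\phi(d)\pi_\phi(u_k)\xi_\phi\|^2=\|\pi_\phi(d)\zeta_k\|^2=0.
\]

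Finally, apply the Akemann--Anderson--Pedersen excision theorem to $\phi$ inside the hereditary subalgebra $C$ (using that $\phi|_C=\phi_C$ is a pure state of $C$) to obtain $c\in C^+$ with $\|c\|=1$ and $\|cxc-\phi(x)c^2\|<\varepsilon^2/2$ for every $x$ in the finite set $\{u_k d^2 u_k:k=0,\ldots,n\}$. Then
\[
\|cu_kd\|^2=\|c(u_k d^2 u_k)c\|\le|\phi(u_k d^2 u_k)|+\varepsilon^2/2=\varepsilon^2/2,
\]
so $\|cu_kd\|<\varepsilon$ for every $k$. The main technical points that deserve verification are the simplicity of $D$ (so that its irreducible representations are forced to be infinite-dimensional), the irreducibility of the compressed representation on $q_DH_\phi$, and the existence of a pure state of $A$ restricting to a prescribed pure state of $C$; all three are standard consequences of the hypotheses, and the excision theorem together with Kadison transitivity then do the real work.
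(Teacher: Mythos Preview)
Your proof is correct and takes a genuinely different route from the paper's. The paper argues via central sequences: since $A$ is not of continuous trace, the Akemann--Pedersen theorem gives a nontrivial central sequence algebra $A_\infty\cap A'$, so one can find orthogonal norm-one positive elements $y,z$ there; compressing lifts of $y$ and $z$ by fixed norm-one positives $c_0\in C$ and $d_0\in D$ produces the required $c$ and $d$. Your argument instead works inside a single irreducible representation, using Kadison transitivity to manufacture $d\in D_+$ that kills the finitely many vectors $\pi_\phi(u_k)\xi_\phi$, and then excision to manufacture $c\in C_+$. The paper's method is shorter once Akemann--Pedersen is available and makes the ``asymptotic orthogonality'' of $c$ and $d$ transparent; your method avoids central sequences entirely and relies only on Kadison transitivity and excision, which is arguably more elementary. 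One step that deserves an extra sentence of justification: you invoke excision of the pure state $\phi$ of $A$ with the excising element required to lie in the hereditary subalgebra $C$, while the test elements $u_kd^2u_k$ lie in $A$ and not in $C$. This is true---an excising net for $\phi|_C$ in $C$, chosen to satisfy $c_\lambda c_\mu=c_\mu$ for $\lambda\le\mu$, also excises $\phi$ in $A$ because $c_\mu x c_\mu=c_\mu(c_\lambda x c_\lambda)c_\mu$ with $c_\lambda x c_\lambda\in C$---but it is not the literal statement of the Akemann--Anderson--Pedersen theorem and should be spelled out.
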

\begin{proof}
We denote $A_{\infty} := l^{\infty}(\N,A)/C_0(\N,A)$, and we identify $A$ with the subalgebra given by constant sequences.  
As $A$ is not a continuous trace algebra, by \cite[Theorem 2.4]{akemann-pedersen}, the central sequence algebra $A_{\infty} \cap A'$ is nontrivial. 
Let $x \in A_{\infty} \cap A'$ be a self-adjoint element whose spectrum has more than one point. 
Since $A$ is simple, the \cstar-algebra generated by $x$ and $A$ inside of $A_{\infty}$ is isomorphic to $C(\sigma(x))\otimes A$, and therefore, if $y \in C^*(x)$ and $a \in A$ then $\|ya\| = \|y\|\|a\|$. 
Since $\sigma(x)$ has more than one point, we may pick $y,z \in C^*(x)_+$ with norm $1$ such that $yz = 0$. Pick  $(y_n)_{n \in \N} , (z_n)_{n \in \N} \in l^{\infty}(\N,A)_+$ which lift $y$ and $z$, respectively. Fix elements $c_0 \in C_+$ and $d_0 \in D_+$ of norm $1$. Then $\lim_{n \to \infty} \|c_0^{1/2}y_n c_0^{1/2}\| = \lim_{n \to \infty} \|d_0^{1/2}z_n d_0^{1/2}\| = 1$, and $ \lim_{n \to \infty}
\|c_0^{1/2}y_n c_0^{1/2} \cdot u_k \cdot d_0^{1/2}z_n d_0^{1/2}\| = \lim_{n \to \infty}
\|c_0y_n z_n u_k d_0\| = 0$. For all sufficiently large $n$, the elements $c = \frac{1}{\|c_0^{1/2}y_n c_0^{1/2}\|}  \cdot c_0^{1/2}y_n c_0^{1/2}$ and $d = \frac{1}{\|d_0^{1/2}z_n d_0^{1/2}\|}  \cdot d_0^{1/2}z_n d_0^{1/2}$ satisfy the requirements.
\end{proof}

\begin{lemma} \label{L.Kishimoto} 
Suppose $A$ is a non-type I, 
separable, simple, unital \cstar-algebra and suppose $\alpha$ 
is an antiautomorphism of $A$ or an outer automorphism 
of $A$. Then for any nonzero hereditary \cstar-subalgebra
$B$ of $A$ and every unitary $u\in A$ we have 
\[
\inf\{\|b u \alpha(b)\|: b\in B_+, \|b\|=1\}=0. 
\]
\end{lemma}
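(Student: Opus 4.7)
Plan:

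Following the strategy of Lemma~\ref{L.0}, I will work in the sequence algebra $A_\infty := \ell^\infty(\bbN, A)/c_0(\bbN, A)$, with $A$ identified with its diagonal embedding. The map $\alpha$ preserves $c_0$-sequences, so it extends coordinatewise to an (anti)automorphism $\alpha_\infty$ of $A_\infty$, which (anti)acts on the central sequence algebra $A_\infty \cap A'$.

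The plan reduces the lemma to producing a positive element $x \in A_\infty \cap A'$ of norm $1$ with $x\,\alpha_\infty(x) = 0$. Given such an $x$ with a positive lift $(x_n)_n$ and any $c_0 \in B_+$ of norm $1$, set $b_n := c_0^{1/2} x_n c_0^{1/2} \in B_+$. Using that $x$ and $\alpha_\infty(x)$ commute with every element of $A$, a direct computation in $A_\infty$ gives
\begin{equation*}
b\, u\, \alpha_\infty(b) \;=\; c_0\, u\, \alpha(c_0) \cdot x\,\alpha_\infty(x) \;=\; 0,
\end{equation*}
where $b$ is the class of $(b_n)$. Crucially, this identity holds regardless of whether $\alpha$ is an automorphism or an antiautomorphism, because in $\alpha_\infty(b) = \alpha_\infty(c_0^{1/2} x c_0^{1/2})$ the outer factors $c_0^{1/2}$ appear symmetrically around $x$, so the order reversal in the antiautomorphism case produces the same expression. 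Hence $\|b_n u \alpha(b_n)\| \to 0$, while the ``tensor-product'' identity invoked in the proof of Lemma~\ref{L.0} forces $\|b_n\| \to \|c_0\|\,\|x\| = 1$, so suitably normalized $b_n$ witness the infimum being $0$.

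To produce $x$: by the Akemann--Pedersen result used in Lemma~\ref{L.0}, $A_\infty \cap A'$ is nontrivial. For the outer automorphism case this is Kishimoto's classical commutation argument: outerness of $\alpha$ forces the induced $\alpha_\infty$ to act non-trivially enough on $A_\infty \cap A'$ to admit a positive element $x$ orthogonal to its image. For the antiautomorphism case, $\alpha_\infty$ restricts to a $*$-automorphism of the center $Z$ of $A_\infty \cap A'$ (antiautomorphisms and automorphisms coincide on abelian $C^*$-algebras); if the induced homeomorphism of the Gelfand spectrum $\hat Z$ has a nonfixed point $\xi$, a positive bump function supported in a neighborhood of $\xi$ disjoint from its image, viewed as an element of $Z \subseteq A_\infty \cap A'$, provides the required $x$.

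The hard part will be the degenerate antiautomorphism case in which $\alpha_\infty$ acts trivially on the center of $A_\infty \cap A'$: the straightforward Gelfand-spectrum separation is then unavailable, and the construction of $x$ must exploit the noncommutative structure of $A_\infty \cap A'$ directly, together with the fact that $\alpha_\infty$ reverses multiplication and that $A$ is non-type-I (so the central sequence algebra is sufficiently large).
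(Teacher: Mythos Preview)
Your reduction is sound: the computation $b\,u\,\alpha_\infty(b)=c_0\,u\,\alpha(c_0)\cdot x\,\alpha_\infty(x)$ is correct (and the symmetry argument neatly handles both the automorphism and antiautomorphism cases), and the norm estimate $\|b\|=1$ follows as in Lemma~\ref{L.0}. But the reduction only trades the problem for an equivalent one: producing $x\in (A_\infty\cap A')_+$ of norm~$1$ with $x\,\alpha_\infty(x)=0$ is essentially a reformulation of the lemma itself, and you do not actually prove it. In the outer-automorphism case your appeal to ``Kishimoto's classical commutation argument'' is just a pointer, not an argument; and in the antiautomorphism case you explicitly concede that the situation where $\alpha_\infty$ fixes the center of $A_\infty\cap A'$ is unhandled. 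That case is not vacuous (think of the coordinatewise transpose on a UHF algebra), so this is a genuine gap, not a technicality.

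The paper avoids this entirely. For outer automorphisms it simply cites \cite[Lemma~1.1]{Kishimoto1981a}, using that an automorphism of a simple \cstar-algebra is outer iff its Connes spectrum is nontrivial. For antiautomorphisms it absorbs the unitary into the map: setting $\alpha'=\Ad u\circ\alpha$, one invokes Hayashi's antiautomorphism analogue \cite[Theorem~1]{hayashi2004kishimoto} to get $\inf\{\|b\,\alpha'(b)\|:b\in B_+,\ \|b\|=1\}=0$, and then observes the one-line identity
\[
\|b\,\alpha'(b)\|=\|b\,u\,\alpha(b)\,u^*\|=\|b\,u\,\alpha(b)\|.
\]
So rather than building central sequences orthogonal to their $\alpha_\infty$-images, the paper reduces to the $u=1$ case and quotes the literature. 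If you want a self-contained proof along your lines, you would effectively have to reprove Kishimoto's and Hayashi's results inside $A_\infty\cap A'$, which is considerably more work than what the paper does.
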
 

\begin{proof} Since an automorphism of a simple \cstar-algebra
is outer if and only if its Connes spectrum is distinct from $\{1\}$, 
the case in which $\alpha$ is an outer automorphism is a special case of  
\cite[Lemma~1.1]{Kishimoto1981a}. 

Suppose $\alpha$ is an antiautomorphism and let $\alpha':=\Ad u\circ \alpha$.   
By \cite[Theorem~1]{hayashi2004kishimoto} 
we have $\inf\{\|b \alpha'(b)\|: b\in B_+, \|b\|=1\}=0$. 
But $\|b\alpha'(b)\|=\|b u \alpha(b)u^*\|=\|b u \alpha(b)\|$
and the conclusion follows. 
\end{proof}  

\begin{lemma} \label{L.crossed} 
Suppose $A$ is a  
separable, simple, unital \cstar-algebra. 
Suppose  $\cX$ and~$\cY$ are disjoint countable sets of unitarily nonequivalent pure states of $A$  
and suppose $E$ is an equivalence relation on $\cY$. Then there exists a separable simple unital \cstar-algebra~$C$ 
with the following properties. 
\begin{enumerate}
\item\label{I.crossed.1}  $A$ is a unital subalgebra of $C$. 
\item \label{I.crossed.2} 
Every $\psi\in \cY$ has a unique extension $\tilde \psi$ to a pure state of $C$. 
\item \label{I.crossed.3} If $\psi_0$ and $\psi_1$ are  in $\cY$ then 
$\psi_0\, E\, \psi_1$ if and only if 
$\tilde\psi_0$ and $\tilde\psi_1$ are unitarily equivalent pure states of $C$. 
\item \label{I.crossed.4} Every $\psi\in \cX$ has more than one extension to a pure state of $C$. 
\pushcounter
\end{enumerate}
In addition,  if $A\cong \cO_2$ then one can arrange $C\cong \cO_2$. 
\end{lemma}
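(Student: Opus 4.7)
Construct $C$ as the inductive limit $C = \overline{\bigcup_n C_n}$ of separable simple unital \cstar-algebras starting from $C_0 = A$, maintaining at each stage a distinguished pure-state extension $\tilde\psi^{(n)}$ of each $\psi \in \cY$ to $C_n$ which is the unique pure-state extension of $\psi$ to $C_n$. A diagonal enumeration handles three families of countably many tasks at successive stages: (i) for each $E$-equivalent pair $\psi_0\, E\, \psi_1$ in $\cY$, arrange at some stage $n$ that $\tilde\psi_0^{(n+1)}$ and $\tilde\psi_1^{(n+1)}$ are unitarily equivalent pure states of $C_{n+1}$; (ii) for each $\phi \in \cX$, arrange at some stage that $\phi$ has at least two distinct pure-state extensions to $C_{n+1}$; (iii) preserve uniqueness of each $\tilde\psi^{(n)}$, and prevent $\tilde\psi_0^{(n)}$ from becoming equivalent to $\tilde\psi_1^{(n)}$ for pairs with $\neg(\psi_0\, E\, \psi_1)$, at every stage.

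For task (i), I adapt the approach of Akemann and Weaver from \cite{AkeWe:Consistency}. Represent $C_n$ on $H := H_{\tilde\psi_0^{(n)}} \oplus H_{\tilde\psi_1^{(n)}}$ via the direct sum of the GNS representations, and consider a unitary $v \in B(H)$ which swaps the two summands and sends $\xi_{\tilde\psi_0^{(n)}}$ to $\xi_{\tilde\psi_1^{(n)}}$. Since $\pi_{\tilde\psi_0^{(n)}}$ and $\pi_{\tilde\psi_1^{(n)}}$ are inequivalent irreducibles, the commutant of $\pi(C_n)$ is $\bbC p_0 \oplus \bbC p_1$, and its intersection with $\{v\}'$ is trivial; hence $C^*(\pi(C_n),v)$ acts irreducibly on $H$, and both vector states descend to equivalent pure states extending $\tilde\psi_0^{(n)}$ and $\tilde\psi_1^{(n)}$. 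To obtain a separable simple $C_{n+1}$ containing this algebra I perform an inner inductive sub-construction; the required control is provided by repeated applications of Lemma~\ref{L.Kishimoto} to the antiautomorphism of the amalgam which swaps the two summands (and to auxiliary automorphisms and unitaries), producing positive elements $b$ with $\|b v \alpha(b)\|$ arbitrarily small. These approximate excisions, combined with Lemma~\ref{L.0}, simultaneously witness the simplicity of $C_{n+1}$, the uniqueness of the other $\tilde\psi^{(n)}$ extensions, and the non-creation of unitary equivalences outside the current $E$-class.

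For task (ii), the non-uniqueness of extensions for $\phi \in \cX$ is arranged by adjoining at the designated stage an element that fails the Anderson excision criterion at $\phi$ — an element that does not approximately commute with $\pi_\phi(C_n)$ along any net in the left kernel of $\phi$ — so that $\phi$ acquires at least two distinct pure-state extensions to $C_{n+1}$. For the $\cO_2$-variant, at each inductive step absorb an extra copy of $\cO_2$ via the Kirchberg--Phillips identification $A \otimes \cO_2 \cong \cO_2$ (valid for every separable unital nuclear simple purely infinite $A$, in particular for $\cO_2$ itself), thus ensuring $C_n \cong \cO_2$ for all $n$ and hence $C \cong \cO_2$ as the unital inductive limit of copies of $\cO_2$.

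The principal obstacle is the preservation of clause (2) throughout the construction: each time a generator $v$ is adjoined for task (i) or (ii), every previously established extension $\tilde\psi^{(n)}$ with $\psi \in \cY$ must retain uniqueness in $C_{n+1}$. Securing this amounts to producing, for every new element, an approximate excision net $(b_\lambda) \subseteq C_n$ for $\tilde\psi^{(n)}$ with $\tilde\psi^{(n)}(b_\lambda^2)=1$ and $\|b_\lambda\, x\, b_\lambda - \rho(x)\, b_\lambda^2\| \to 0$ for every $x$ in a dense subset of $C_{n+1}$ and some scalar $\rho(x)$ — which is furnished exactly by iterated applications of Lemma~\ref{L.Kishimoto} with $\alpha$ running over the relevant antiautomorphisms and outer automorphisms arising from the amalgamations performed in earlier stages.
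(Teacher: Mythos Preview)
Your outline is not a proof, and it misses the device that makes the lemma go through. The paper constructs $C$ in one stroke as a crossed product $A\rtimes_\beta\bbZ$: after enlarging $\cY$ so that each $E$-class is infinite, one invokes the Kishimoto--Ozawa--Sakai transitivity theorem (in the extended form of \cite[p.~7523--7524]{AkeWe:Consistency}) to produce a single automorphism $\beta$ of $A$ that shifts the GNS classes within each $E$-class by one and fixes, up to equivalence, the classes coming from $\cX$. Simplicity of $C$ is then \cite[Theorem~3.1]{Kishimoto1981a}, and clauses (\ref{I.crossed.2}) and (\ref{I.crossed.4}) follow immediately from \cite[Theorem~2]{AkeWe:Consistency}, which says a pure state has a unique extension to the crossed product iff it is inequivalent to all its $\beta$-translates; clause (\ref{I.crossed.3}) is a short excision computation, and the $\cO_2$ case goes via Nakamura's Rokhlin theorem and Kirchberg--Phillips. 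The point is that all of the pure-state control is encoded in the single choice of $\beta$, and the crossed-product structure is what makes that encoding effective.

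Your inductive scheme, by contrast, never actually produces a $C_{n+1}$. Adjoining a swap unitary $v\in B(H)$ to $\pi(C_n)$ yields an algebra that is typically not simple, and the promised ``inner inductive sub-construction'' is not carried out; Lemma~\ref{L.Kishimoto} cannot be applied there since it requires as input an antiautomorphism or outer automorphism of a \emph{simple} algebra, and no such map on your amalgam has been specified. More seriously, you give no mechanism for preserving the unique-extension property for \emph{every} $\psi\in\cY$ each time a generator is adjoined: asserting the existence of excision nets with $\|b_\lambda x b_\lambda-\rho(x)b_\lambda^2\|\to 0$ for all $x$ in the enlarged algebra is just a restatement of the conclusion you need, so the argument is circular. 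In the paper's approach this preservation is automatic from the shape of $\beta$ and \cite[Theorem~2]{AkeWe:Consistency}; in your plan it is the gap. (Incidentally, what you describe as ``the approach of Akemann and Weaver'' is not what they do: they too build an automorphism via Kishimoto--Ozawa--Sakai and pass to a crossed product, rather than adjoining a concrete unitary on a Hilbert space.)
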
 

\begin{proof} 
We shall construct an automorphism $\beta$ of $A$ of infinite order such that 
the crossed product $C:=A\rtimes_\beta \bbZ$ is as required.  
By \cite[Theorem~2]{AkeWe:Consistency}, a pure state $\varphi$ of $A$ has a
unique extension to a pure state of $C$ if and only if $\varphi$ is nonequivalent 
to $\varphi\circ \beta^n$ for all $n\neq 0$. Since $A$ is non-type I and separable, 
by Glimm's theorem it has $2^{\aleph_0}$ nonequivalent pure states. 
We can therefore extend $\cY$ to ensure that every $E$-equivalence class is infinite
and that there are infinitely many equivalence classes. We can similarly assume $\cX$ is infinite.  
 Let $\pi_j^k$, for $j\in \bbZ$, be an enumeration of GNS representations 
 corresponding to states in the $k$-th $E$-equivalence class. 
  Let $\sigma_j$,  for $j\in \bbN$, be an enumeration of 
   GNS  representations corresponding to states in  $\cX$. All of these representations correspond
   to pure states and are therefore irreducible.  
 By  the extension of \cite{KiOzSa}  proved in  \cite[p. 7523--7524]{AkeWe:Consistency} there exists an automorphism 
 $\beta$ of $A$ such that 
 \begin{enumerate}
\popcounter
 \item \label{I.crossed.5} 
$\pi_j^k$ is equivalent to $\pi_l^m\circ \beta$
 if and only if $k=m$ and $j=l+1$. 
\item  \label{I.crossed.6}$\sigma_j$ is equivalent to $\sigma_j\circ \beta$ for all $j$. 
\end{enumerate}
By \cite[Theorem~3.1]{Kishimoto1981a}  
the crossed product $C:=A\rtimes_\beta\bbZ$ is simple. 
By \cite[Theorem~2]{AkeWe:Consistency} it 
satisfies \eqref{I.crossed.1}, 
\eqref{I.crossed.2}, and
\eqref{I.crossed.4}.  

To prove \eqref{I.crossed.3},  
fix $\psi_0$ and $\psi_1$ in $\cY$. If $\psi_0 \, E\, \psi_1$ then  
  \eqref{I.crossed.6} implies that the unique pure state extensions of $\psi_0$ and $\psi_1$ to $C$  
are equivalent. Now suppose $\psi_0$ and $\psi_1$ are  not $E$-related. Then $\psi_0$ and $\psi_1\circ \beta^n$ are inequivalent for all $n\in \bbZ$. 
To get a contradiction, suppose that the unique pure state extensions of $\psi_0$ and $\psi_1$ to $C$ are equivalent and let $v$ be a unitary in $C$  such that $\psi_0=\psi_1\circ \Ad v$. 
Let $u$ be the canonical unitary implementing $\beta$. 
Approximate $v$ up to $1/2$ by a finite linear combination $\sum_{n=-k}^k  c_n u^n$, where $c_n\in A$. 
Choose decreasing sequences $a_{j}, b_j$, for $j\in \bbN$, 
of positive elements of norm 1 
such that the $a_j$ excise $\psi_0$ and the 
$b_j$ excise $\psi_1$ (\cite[Proposition~2.2]{AkeAndPed}). Note that $\beta^n(b_j)$ excises $\psi_1\circ \beta^{-n}$ for all $n$.  
By \cite[Lemma~1]{AkeWe:Consistency}, for all $x\in A$  we have $\|a_j x \beta^n(b_j)\|\to 0$ as $j\to \infty$. 
Thus, for  $j$ large enough,  we have  
$\|a_j c_n \beta^n(b_j)\|<1/(4k+2)$ for all $-k\leq n\leq k$. Then $\|a_j  v b_j v^*\|=\|a_j  v b_j\|<1$. 
On the other hand, the Cauchy--Schwarz inequality  implies $\psi_0(a_j  v b_j v^*)=\psi_0(v b_j v^*)=\psi_1(b_j)=1$; contradiction. 

Finally, if $A\cong \cO_2$, then $C=A\rtimes_\beta \bbZ\cong \cO_2$. One way to see this is to note that by (\ref{I.crossed.5}) above,
  no non-zero power of $\beta$ is inner, therefore by \cite[Theorem 1]{nakamura} the automorphism $\beta$ has the Rokhlin property, hence by \cite[Theorem 4.4]{hirshberg-winter} we have $C \cong C \otimes \cO_2$, so by \cite[Theorem 3.8]{kirchberg-phillips} 
we have $C \cong \cO_2$. 
\end{proof} 

The following is a strengthening 
of \cite[Theorem 2.1]{Kishimoto1981a}.
\begin{lemma}
\label{lemma:uncountable-inequivalent-states}
Suppose $A$ is a non-type I, 
separable, simple, unital \cstar-algebra, and suppose   $\alpha$ is  an antiautomorphism,  or an outer automorphism. Then there exists a family $\cW$ of $2^{\aleph_0}$ 
pure states of $A$ such that $\varphi$ is not unitarily equivalent to $\varphi\circ\alpha$
for every $\varphi\in \cW$. 
\end{lemma}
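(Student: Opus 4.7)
The plan is to perform a Cantor-tree construction producing $2^{\aleph_0}$ pairwise distinct pure states of $A$, each inequivalent to its composition with $\alpha$. By separability, fix countable norm-dense sequences $(u_k)_{k\in\bbN}$ of unitaries of $A$ and $(a_k)_{k\in\bbN}$ of self-adjoint elements. The tree will be a family $\{b_s : s \in 2^{<\bbN}\}$ of positive norm-one elements of $A$; write $B_s := \overline{b_s A b_s}$ for the associated hereditary subalgebra.

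The inductive construction arranges, for each $s$ of length $n$: (i) $b_{s\conc 0}$ and $b_{s\conc 1}$ lie in orthogonal hereditary subalgebras of $B_s$, so in particular $b_{s\conc 0} b_{s\conc 1} = 0$; (ii) $\|b_s u_k \alpha(b_s)\| < 2^{-n}$ for every $k \leq n$; and (iii) $\|b_s a_k b_s - \lambda_{s,k} b_s^2\| < 2^{-n}$ for every $k \leq n$ and some $\lambda_{s,k} \in \bbC$, a Glimm-style excising estimate. At stage $n+1$, working inside $B_s$---which is separable, simple, unital on its own unit, and non-type I (these properties are inherited from $A$)---I first select two orthogonal positive norm-one elements of $B_s$ using simplicity and infinite-dimensionality; they generate orthogonal hereditary subalgebras which will contain $b_{s\conc 0}$ and $b_{s\conc 1}$ respectively. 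Inside each, Glimm's lemma furnishes (iii), and iterated applications of Lemma \ref{L.Kishimoto} (successively shrinking to smaller hereditary subalgebras so that earlier smallness estimates are preserved) furnish (ii).

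For each branch $\sigma \in 2^{\bbN}$, condition (iii) together with the Akemann--Anderson--Pedersen excision theorem (\cite[Proposition 2.2]{AkeAndPed}) ensures that the nested sequence $(b_{\sigma \restriction n})_n$ excises a pure state $\varphi_\sigma$ of $A$; condition (i) gives $\varphi_\sigma(b_{\sigma \restriction n}) = 1$ while $\varphi_\tau(b_{\sigma \restriction n}) = 0$ whenever $\sigma \neq \tau$ branch by level $n$, so we obtain a family $\{\varphi_\sigma : \sigma \in 2^{\bbN}\}$ of $2^{\aleph_0}$ pairwise distinct pure states. For each fixed $\sigma$, the excising sequence $(b_{\sigma \restriction n})$ satisfies exactly the smallness hypothesis (ii) required by the classical Kishimoto argument of \cite[Theorem~2.1]{Kishimoto1981a} (and by \cite[Theorem~1]{hayashi2004kishimoto} in the antiautomorphism case); combined with the density of $(u_k)$ in the unitary group of $A$, this rules out $\varphi_\sigma \sim \varphi_\sigma \circ \alpha$ via any unitary $v \in A$, by the same Cauchy--Schwarz/excision computation used at the end of the proof of Lemma \ref{L.crossed}: approximate $v$ by some $u_k$, use (ii) to see that $|\varphi_\sigma(b_{\sigma\restriction n}\, u_k\, \alpha(b_{\sigma\restriction n}))| \to 0$, and use the GNS picture for the putative equivalence to see that the analogous quantity with $v$ in place of $u_k$ cannot vanish.

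The principal obstacle is organizing the tree so that every branch individually satisfies the hypotheses of the single-state Kishimoto argument, amounting to routing Kishimoto's original proof through a Cantor-style bifurcation while orthogonally securing the Glimm-type estimates; the genuinely new ingredient beyond Kishimoto's own construction is just the dyadic branching via orthogonal hereditary subalgebras, which is straightforward given simplicity and non-type I-ness. A minor subsidiary point is the iteration of Lemma \ref{L.Kishimoto} over finitely many unitaries simultaneously in order to secure (ii); this reduces to the single-unitary statement by the standard successive-shrinking trick for Kishimoto-style lemmas.
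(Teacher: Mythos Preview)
Your proposal is correct and follows essentially the same blueprint as the paper: a binary tree indexed by $2^{<\bbN}$, with Lemma~\ref{L.Kishimoto} applied at each node against a dense sequence of unitaries, and a pure state read off along each branch.

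A few differences are worth flagging. First, the paper dispenses with your Glimm-excision condition (iii): rather than pinning down the branch state on a dense set, it arranges exact nesting $a(s)a(s^\frown j)=a(s^\frown j)$ so that the face $\{\psi:\psi(a(h\rs n))=1\text{ for all }n\}$ is nonempty, and simply takes an extreme point. Second, your condition (ii) for all $k\le n$ is stronger than necessary: with strict nesting in place the paper's single estimate $\|a(s)u_{|s|}\alpha(a(s))\|<\delta$ at each level automatically propagates downward, so the successive-shrinking iteration over finitely many unitaries is avoidable. Third---and most relevant for the downstream application---the paper actually proves that the $\zeta_h$ are pairwise unitarily \emph{inequivalent}, via an additional estimate $\|e(s^\frown 0)u_k e(s^\frown 1)\|<\delta$ coming from Lemma~\ref{L.0}; your orthogonality condition (i) only yields pairwise \emph{distinct} states. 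That suffices for the lemma exactly as stated, but the use made of it in Lemma~\ref{L.onestep} tacitly requires avoiding countably many unitary-equivalence classes, each of which can have cardinality $2^{\aleph_0}$, so mere distinctness would not be enough there.
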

\begin{proof}
The proofs in the case when  $\alpha$ is an outer automorphism 
and when $\alpha$ is an antiautomorphism differ very little and will be presented simultaneously.

Let $u_n$, for $n\in \bbN$, be an enumeration of a dense set of unitaries of $A$. 
By $\SEQ$ we denote the set of all finite sequences of $\{0,1\}$
 ordered by the  end-extension, denoted $s\sqsubset t$. 
 The empty sequence
 $\NULL$ is the minimal element of $\SEQ$, its immediate successors are $0$ and 
$1$, and the immediate successors of $s\in \SEQ$ are $s^\frown 0$ and 
$s^\frown 1$. The length of $s\in \SEQ$ is denoted $|s|$.  
  
Given $\delta \in (0,1/2)$, we claim that there exist  $a(s)$ and $e(s)$ in $A_+$, for $s \in \SEQ$ and $j=0,1$ 
 for $s\in \SEQ$, with the following properties:
 \begin{enumerate}
\item\label{I.onestep.1} $\|a(s)\|=\|e(s)\|=1$. 
\item\label{I.onestep.2} $a(s) e(s^\frown j) = e(s^\frown j)$.
\item  \label{I.onestep.3}$e(s) a(s)=a(s)$.  
\item\label{I.onestep.4.1} $\|e(s^\frown 0) e(s^\frown 1)\|<\delta$. 
\item \label{I.onestep.5} $\|e(s^\frown 0) u_ke(s^\frown 1)\|<\delta$ for all $k\leq |s|$. 
\item \label{I.onestep.6} $\|a_{s} u_{|s|} \alpha(a_{s})\|<\delta$.
\end{enumerate}
The  family $\{e(s),a(s)\}_{s\in \SEQ}$ will be constructed by recursion. Define $f_t,g \colon [0,1] \to [0,1]$ for $t \in (0,1)$ as follows.
\begin{center}
\begin{picture}(275,65)
 \put(0,10){\vector(1,0){75}}
 \put(5,3){\vector(0,1){50}}
 \put(35,7){\line(0,1){6}}
 \put(65,7){\line(0,1){6}}
 \put(4,40){\line(1,0){6}}
\thicklines
 \put(5,10){\line(1,1){30}}
 \put(35,40){\line(1,0){30}}
 \put(1,40){\makebox(0,0){$1$}}
 \put(35,-8){\makebox(0,0)[b]{$t$}}
 \put(65,-3){\makebox(0,0)[b]{\footnotesize $1$\normalsize}}
 \put(35,55){\makebox(0,0){$f_t$}}
   \put(90,10){\vector(1,0){75}}
    \put(95,3){\vector(0,1){50}}
    \put(125,7){\line(0,1){6}}
    \put(140,7){\line(0,1){6}}
    \put(155,7){\line(0,1){6}}
    \put(154,40){\line(1,0){6}}
   \thicklines
    \put(95,11){\line(1,0){30}}
    \put(125,10){\line(1,2){15}}
    \put(140,40){\line(1,0){15}}
    \put(91,40){\makebox(0,0){$1$}}
    \put(125,-8){\makebox(0,0)[b]{$\frac{1}{2}$}}
     \put(140,-8){\makebox(0,0)[b]{$\frac{3}{4}$}}
    \put(155,-3){\makebox(0,0)[b]{\footnotesize $1$\normalsize}}
    \put(125,55){\makebox(0,0){$g$}}
\end{picture}
\end{center}
Notice that $f_{1/2} \cdot g = g$, and $\|f_t - \id\| = 1-t$. 
Fix $\e \in (0,1/2)$ such that whenever $x,y$ are positive contractions in some $C^*$-algebra and $z$ is any contraction such that $\|xzy\|<\e$ then $\|f_{1/2}(x)zf_{1/2}(y)\|<\delta$ and $\|g(x)zg(y)\|<\delta$. (This is done using polynomial approximations for $f_{1/2}$ and for $g$.)

Let $a(\NULL)=1$. 
Suppose $a(s)$ was chosen. 
By Lemma~\ref{L.0} applied to $n=|s|+1$ and the unitaries $u_k$ for $k\leq n$, 
there exist $h_0,h_1\in B(s)_+$  
such that $\|h_0\|=\|h_1\|=1$ and 
$\|h_0 u_{k} h_1\| < \e$.
 for all $k\leq |s|$. 
Let 
\[
e(s^\frown j):=f_{1/2}(h_j) .
\]
By Lemma~\ref{L.Kishimoto} there exists $a (s^\frown j)\in \overline{g(h_j)A g(h_j)}_+$ that satisfies
 $\|a(s^\frown j)\|=1$  
and $\|a(j) u_{|s|} \alpha(a(j))\|<\delta$. We may assume without loss of generality, that there exists a nonzero positive element $b(s^\frown j)$ with $a(s^\frown j) b(s^\frown j) = b(s^\frown j)$ (by replacing $a(s^\frown j)$ by $f_t(a(s^\frown j))$ for~$t$ sufficiently close to $1$ if need be).

 The family $\{e(s), a(s)\}_{s\in \SEQ}$ satisfying \eqref{I.onestep.1}--\eqref{I.onestep.6}
 can now be constructed by using a standard bookkeeping device. 
 Fix an enumeration $s_j$, for $j\in \bbN$, for $\SEQ$ such that $s_j \sqsubset s_k$
 implies $j<k$ (e.g. let $\{s\in \SEQ:  |s|=n\}$ be enumerated as $s_j$, for $2^{n-1}\leq j <2^n$). 
By using the above, one can recursively find $e(s_j)$ and $a(s_j)$ for $j\in \bbN$
 in the hereditary subalgebra on which all the
 elements of the form $e(s)$ and $a(s)$, where $s\sqsubset s_j$,  act  as the identity.

Denote the set of all infinite sequences of $\{0,1\}$ by $\xSEQ$. 
For $h\in \xSEQ$ let  $h\rs n$  denote the initial segment of $h$ of length $n$, for $n\in \bbN$. 
For  $h\in \xSEQ$ we have  $h\rs n\in \SEQ$ 
and
\[
\cF(h):=\{a(h\rs n): n\in \bbN\}
\]
is a sequence of elements of $A_+$ of norm 1 such that 
$$
a(h \rs n) a(h\rs (n+1))=a(h\rs (n+1))
$$ 
for all $n$. Hence 
\[
\{\zeta\in \cS(A)  : \zeta(a)=1\text{ for all }a\in \cF(h)\}
\]
is a face of $\cS(A)$. Let  $\zeta_h$ be an extreme point of this face; 
then $\zeta_h$ is a pure state of~$A$ satisfying   $\zeta_h(a(h\rs n))=1$ for all $n$. 
By \eqref{I.onestep.3} we have $\zeta_h(e(h\rs n))=1$ for all $n$ and thus,
by the 
Cauchy--Schwarz inequality, we have $\zeta_h(e(h\rs n)b )=\zeta_h(b)$ for all $b$ and for all $n$. 

We claim that the 
 states $\zeta_h$ and $\zeta_{h'}$ are not unitarily equivalent if $h\neq h'$. 
 Suppose otherwise. Then for some $j\in \bbN$ we have $\|\zeta_h-\zeta_{h'}\circ \Ad u_j\|<1/2$. 
 Fix  $n\geq j$ large enough to have $h\rs n\neq h'\rs n$. 
 By  \eqref{I.onestep.5} we have 
 $\|e(h\rs n) \Ad u_j  (e(h'\rs n))\| < \delta < 1/2$, 
 but $|\zeta_h(e(h\rs n) \Ad u_j  (e(h'\rs n))| =|\zeta_h(\Ad u_j  (e(h'\rs n))|>1/2$, 
 a contradiction. 
 
  By the same argument and \eqref{I.onestep.6}, 
  $\zeta_h$ is not equivalent to  $\zeta_h\circ \alpha$ for every $h\in \xSEQ$. 
  We should note that whether $\alpha$ be an automorphism 
  or an antiautomorphism, it  preserves the order structure of $A$ and
  it is an affine homeomorpism of $\cS(A)$ onto itself. 
  Therefore $\zeta_h\circ \alpha$ is a pure state of $A$. 
\end{proof}

The next few technical lemmas will be used to construct a UHF example.

\begin{definition} \label{Def.Separated} 
Suppose $A$ is a separable UHF algebra. A family of pure states 
$\langle \varphi_n : n\in \bbN \rangle$
of $A$
will be called \emph{separated product states} if there exist
$\langle k(n) : n \in \bbN \rangle$, a map $\Phi$, subalgebras $A_n$, and projections $\langle p_{n,j} : n \in \bbN \, , \, j < n \rangle$ and $\langle q_n : n \in \bbN\rangle$ 
with  the following properties. 
\begin{enumerate}
\item  $k(n)\geq 1$, for $n\in \bbN$. 
\item  $\Phi\colon A\to \bigotimes_n M_{k(n)}(\bbC)$ is an isomorphism. 
\item $A_n:=\bigotimes_{j<n} M_{k(j)}(\bbC)$. 
\item  $p_{n,j}$, for $0\leq j<n$,  are orthogonal rank 1 projections 
in  $M_{k(n)}(\bbC)$, 
for all $n$, 
\item  $q_m\in A_m$ is a rank-1 projection, and 
\item $\varphi_m$ is the product state of $A_n\otimes \bigotimes_{j=m+1}^\infty M_{k(j)} (\bbC)$
uniquely determined by the requirement  that for all $l\geq 1$ we have 
\[
\varphi_m(q_m\otimes p_{m+1,m}\otimes p_{m+2,m}\otimes\dots \otimes p_{m+l,m})=1. 
\]
\end{enumerate}
\end{definition} 

\begin{lemma} \label{L.UHF.1} Suppose 
 $A$ is a UHF algebra and $\pi_n$ for $n\in \bbN$, are irreducible representations 
 of $A$.  Then the following are equivalent. 
\begin{enumerate}
\item\label{I.L1.1}  $\langle \pi_n : n\in \bbN\rangle$  are pairwise nonequivalent irreducible representations of $A$, 
\item \label {I.L1.2}There are separated product  states  $\varphi_n$, for $n\in \bbN$, such that $\pi_n$ is the GNS representation corresponding to $\varphi_n$
for all $n$. 
\end{enumerate}
\end{lemma}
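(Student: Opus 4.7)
For $(2)\Rightarrow(1)$: Each $\varphi_m$ is a tensor product of rank--1 (hence pure) states on the factors $M_{k(n)}(\bbC)$, so $\varphi_m$ is pure and $\pi_m$ is irreducible. For $m<m'$ and any $n>m'$, the restrictions of $\varphi_m$ and $\varphi_{m'}$ to $M_{k(n)}(\bbC)$ are the pure states associated to $p_{n,m}$ and $p_{n,m'}$, which are orthogonal rank--1 projections. Implementing these by unit vectors $\xi_{n,m},\xi_{n,m'}$, one has $\langle \xi_{n,m},\xi_{n,m'}\rangle=0$, so
\[
\sum_{n>m'}\bigl(1-|\langle \xi_{n,m},\xi_{n,m'}\rangle|^2\bigr)=\infty.
\]
By Powers' dichotomy theorem for pure product states of UHF algebras, $\varphi_m$ and $\varphi_{m'}$ are disjoint, so $\pi_m$ and $\pi_{m'}$ are unitarily inequivalent.

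For $(1)\Rightarrow(2)$: Fix pure states $\psi_m$ whose GNS representations are unitarily equivalent to $\pi_m$, and an initial presentation $A\cong\bigotimes_{i=0}^{\infty}M_{l_i}(\bbC)$. The plan is to build, by induction on $n$, a regrouping of the $M_{l_i}(\bbC)$ into larger factors $M_{k(j)}(\bbC)$, orthogonal rank--1 projections $p_{j,i}\in M_{k(j)}(\bbC)$ for $i<j<n$, rank--1 projections $q_m\in A_m$ for $m<n$, and a Cauchy sequence of unitaries $v_m^{(n)}\in A$ with $\|v_m^{(n+1)}-v_m^{(n)}\|<2^{-n}$, such that on $A_n$ the state $\psi_m\circ\Ad v_m^{(n)}$ approximates within $2^{-n}$ the vector state determined by $q_m\otimes p_{m+1,m}\otimes\cdots\otimes p_{n-1,m}$. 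Taking norm limits $v_m:=\lim_n v_m^{(n)}$, the states $\varphi_m:=\psi_m\circ\Ad v_m$ would then be separated product states with GNS representations unitarily equivalent to $\pi_m$.

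The induction step is the technical core. Group enough remaining $M_{l_i}(\bbC)$ into the new factor $M_{k(n)}(\bbC)$ so that $k(n)\geq n$ and so that, on this factor, the restrictions of $\pi_0,\dots,\pi_n$ are in effect $n+1$ irreducible representations of a large matrix algebra that are pairwise ``nearly orthogonal'' in the appropriate quantitative sense. A Kadison/Glimm-style transitivity argument then produces orthogonal rank--1 projections $p_{n,0},\dots,p_{n,n-1}\in M_{k(n)}(\bbC)$ together with norm-small unitary perturbations $v_m^{(n+1)}$ of $v_m^{(n)}$ that align $\psi_m\circ\Ad v_m^{(n+1)}$ with $\tr(p_{n,m}\,\cdot)$ on $M_{k(n)}(\bbC)$, while keeping the state on $A_n$ essentially unchanged. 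The key quantitative input is that pairwise inequivalent pure states of a UHF algebra become asymptotically almost orthogonal on sufficiently deep tail subalgebras, a consequence of Powers' criterion that lets us choose $k(n)$ and the new factor location deep enough to achieve $2^{-n}$-precision.

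The principal obstacle I expect is this single inductive step: coordinating the simultaneous alignment of $n+1$ inequivalent pure states to orthogonal vector states on one matrix factor by a \emph{uniformly} small unitary perturbation. The ingredients (Kadison transitivity, Powers-style disjointness on tails, standard bookkeeping for Cauchy sequences of unitaries) are classical, but extracting quantitative estimates that are compatible across all $n+1$ states simultaneously requires careful coordination.
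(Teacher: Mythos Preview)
Your $(2)\Rightarrow(1)$ argument is essentially the paper's: you invoke Powers' criterion for disjointness of product states, while the paper cites Glimm's criterion \cite[Theorem~3.4]{Glimm:On} (the existence of a projection $p\in A_n'\cap A$ with $\varphi_j(p)=0$, $\varphi_l(p)=1$ for every $n$). These are the same fact in slightly different packaging.

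For $(1)\Rightarrow(2)$ you take a genuinely different, and much longer, route. The paper's argument is a two-line reduction to a known homogeneity theorem: pick \emph{any} family of separated product states $\varphi_j$ (these exist trivially), observe by the first direction that they are pairwise inequivalent, and then invoke the extension of Kishimoto--Ozawa--Sakai proved in \cite[p.~7523--7524]{AkeWe:Consistency} (or, for UHF, \cite[Theorem~7.5]{FuKaKi}) to obtain a single automorphism $\beta$ of $A$ with $\varphi_j=\psi_j\circ\beta$ for all $j$. Then the $\psi_j$ themselves are separated product states with respect to the new isomorphism $\Phi\circ\beta^{-1}$, and their GNS representations are the $\pi_j$. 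That is the entire proof.

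What you sketch---a recursive regrouping of tensor factors together with a Cauchy sequence of unitaries aligning each $\psi_m$ to a product state factor by factor---is essentially a reproof, in the UHF setting, of exactly the homogeneity theorem the paper is citing. Your ``principal obstacle'' (simultaneously aligning $n+1$ inequivalent pure states to orthogonal rank-one projections on one matrix factor by uniformly small unitary perturbations) \emph{is} the substantive content of \cite[Theorem~7.5]{FuKaKi}. So your plan is sound in principle, but you should be aware that the step you flag as hard is a full-length theorem in its own right, not a routine Kadison-transitivity exercise; in particular, your heuristic that inequivalent pure states ``become asymptotically almost orthogonal on sufficiently deep tail subalgebras'' needs care, since restrictions of arbitrary (non-product) pure states to tail factors need not be pure and are automatically unitarily equivalent on any single matrix block. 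The payoff of your route would be self-containment; the payoff of the paper's route is brevity and a clean separation of concerns.
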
 

\begin{proof} Suppose 
  $\varphi_j$, for $j\in \bbN$, are separated product states of a UHF algebra. 
  For all $j\neq l$ and  $n\in \bbN$ there exists a projection $p\in A_n'\cap A$
  such that $\varphi_j(p)=0$ and $\varphi_l(p)=1$, 
  and therefore  \cite[Theorem~3.4]{Glimm:On} implies that 
$\varphi_l$ is not unitarily equivalent to $\varphi_j$ for $j\neq l$. 

 Now suppose $\pi_j$, for $j\in \bbN$, are as in \eqref{I.L1.2}.
 Let  $\psi_j$ be  a  pure state such that $\pi_j$ 
 is the GNS representation corresponding to $\psi_j$ for $j\in \bbN$. 
Let $\varphi_j$, for $j\in \bbN$, be a sequence of separated pure states of $A$. 
By \eqref{I.L1.1} these pure states are nonequivalent and 
 by  the extension of \cite{KiOzSa}  proved in  \cite[p. 7523--7524]{AkeWe:Consistency} 
(or, since $A$ is UHF,  by   \cite[Theorem~7.5]{FuKaKi})  there exists an automorphism $\beta$ of $A$ such that 
 $\varphi_j =\psi_j\circ \beta$ for all $j\in \bbN$, as required. 
\end{proof} 


We need the following variant of Lemma~\ref{L.crossed} 
for the CAR algebra, $M_{2^\infty}$. 

\begin{lemma} \label{L.UHF.crossed} Suppose $A\cong M_{2^\infty}$. 
Suppose  $\cX$ and~$\cY$ are disjoint countable sets of unitarily nonequivalent pure states of $A$  
and $E$ is an equivalence relation on $\cY$. Then there exists a separable simple unital \cstar-algebra $C$ 
with the following properties. 
\begin{enumerate}
\item $C\cong M_{2^\infty}$. 
\item\label{I.UHF.crossed.1}  $A$ is a unital subalgebra of $C$. 
\item \label{I.UHF.crossed.2} 
Every $\psi\in \cY$ has a unique extension $\tilde \psi$ to a pure state of $C$, 
\item \label{I.UHF.crossed.3} If $\psi_0$ and $\psi_1$ are  in $\cY$ then 
$\psi_0\, E\, \psi_1$ if and only if 
$\tilde\psi_0$ and $\tilde\psi_1$ are unitarily equivalent pure states of $C$. 
\item Every $\psi\in \cX$ has more than one extension to a pure state of $C$. 
\pushcounter 
\end{enumerate}
\end{lemma}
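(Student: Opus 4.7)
The approach parallels that of Lemma~\ref{L.crossed}, but must avoid a $\bbZ$-crossed product: for $A\cong M_{2^\infty}$ the Pimsner--Voiculescu sequence shows that $A\rtimes_\beta\bbZ$ has nontrivial $K_1$, so it is not UHF of type $2^\infty$. Instead, I would take a crossed product by the countable locally finite group $G:=\bigoplus_{n\in\bbN}\bbZ/2\bbZ$, arranging that each generator acts with the Rokhlin property so that $A\rtimes_\alpha G_n\cong M_{2^\infty}$ at every finite stage $G_n:=\bigoplus_{i<n}\bbZ/2\bbZ$, and hence also in the inductive limit.

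First, by Lemma~\ref{L.UHF.1}, after applying an automorphism of $A$ we may assume that $\cX\cup\cY$ consists of separated product states. Following the opening of the proof of Lemma~\ref{L.crossed}, we enlarge $\cY$ so that $E$ has countably many countably infinite classes, and we enlarge $\cX$ to be countably infinite. Using iterated applications of the UHF version of Kishimoto--Ozawa--Sakai (\cite[Theorem~7.5]{FuKaKi}), construct an action $\alpha\colon G\to\Aut(A)$ so that: each generator of $G$ acts with the Rokhlin property on the intermediate crossed product; each $\psi\in\cX$ satisfies $\psi\sim\psi\circ\alpha_g$ for all $g\in G$; and the $G$-orbits on $\cY$ (up to unitary equivalence) are free and coincide with the $E$-classes. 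The action is built one generator at a time, each new $\bbZ/2$ contributing one more binary digit in the labelling of a state within its $E$-class.

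Take $C:=A\rtimes_\alpha G=\dirlim_n(A\rtimes_\alpha G_n)$. Each $A\rtimes_\alpha G_n$ is $M_{2^\infty}$ by the Rokhlin hypothesis, so $C\cong M_{2^\infty}$. The extension properties now follow as in Lemma~\ref{L.crossed}: the analogue of \cite[Theorem~2]{AkeWe:Consistency} for $G$, obtained by iterating the $\bbZ/2$ case over the $G_n$, yields unique extensions for $\psi\in\cY$ (free orbit) and multiple extensions for $\psi\in\cX$ (trivial orbit up to equivalence), giving \eqref{I.UHF.crossed.2} and the last item. For \eqref{I.UHF.crossed.3}, equivalence of extensions within an $E$-class is realized by the canonical unitaries in $C$ implementing the corresponding elements of $G$, while inequivalence across $E$-classes follows from the same Cauchy--Schwarz excision argument used at the end of the proof of Lemma~\ref{L.crossed}, applied to the excising projections $q_m, p_{n,m}$ afforded by the separated product state structure of Definition~\ref{Def.Separated}.

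The principal obstacle is constructing the action $\alpha$: one must simultaneously prescribe the $G$-orbit structure on countably many pure states of $A$ while ensuring the Rokhlin property at every stage so that the crossed product stays $M_{2^\infty}$. This is the UHF-preserving substitute for the choice of $\beta$ in Lemma~\ref{L.crossed}, and weaving the Rokhlin condition into the KOS extension at each inductive step is where most of the technical work of the proof lies.
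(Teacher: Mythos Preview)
Your overall architecture---iterated $\bbZ/2\bbZ$ crossed products, reducing first to separated product states via Lemma~\ref{L.UHF.1}---matches the paper. The paper also builds $C$ as an inductive limit of algebras of the form $A_n\rtimes_{\beta_n}\bbZ/2\bbZ$, and the extension/inequivalence properties are checked essentially as you suggest.

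The main difference is in how the order-$2$ automorphisms are produced. You propose to construct each generator abstractly via the Kishimoto--Ozawa--Sakai theorem, and then separately arrange the Rokhlin property so that the crossed product stays $M_{2^\infty}$; you correctly flag this weaving as the hard part. The paper sidesteps this entirely. Once the states in $\cX\cup\cY$ are separated product states with respect to $A\cong\bigotimes_n M_{k(n)}(\bbC)$, the automorphism is taken to be of \emph{product type}: $\beta=\bigotimes_n\Ad u_n$, where each $u_n\in M_{k(n)}(\bbC)$ is an order-$2$ unitary with $\Tr(u_n)=0$ that permutes the distinguished rank-$1$ projections $p_{n,j}$ in the desired way (fixing those associated to $\cX$, swapping those associated to $\cY$). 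No Rokhlin machinery is invoked: the trace-zero condition lets one compute the Bratteli diagram of $A\rtimes_\beta\bbZ/2\bbZ$ by hand and read off that it is $M_{2^\infty}$. Inequivalence of the extensions across $E$-classes is then obtained from Glimm's product-state criterion \cite[Theorem~3.4]{Glimm:On} rather than from an excision argument.

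So your plan is workable in principle, but the paper's route is strictly more elementary: the product-type construction makes both the orbit structure on pure states and the isomorphism type of the crossed product transparent simultaneously, eliminating exactly the technical obstacle you identify at the end of your proposal.
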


 \begin{proof} We shall first provide a proof in case when $E$ is the identity relation on $\cY$. 
 By Lemma~\ref{L.UHF.1} we may identify  $A$ with $\bigotimes_n M_{k(n)}(\bbC)$
 witnessing that the  pure states in $\cX\cup \cY$ 
are separated. 
Since $A\cong M_{2^\infty}$, for every $n$ 
there exists $l(n)\in \bbN$ such that 
$k(n)=2^{l(n)}$. We may assume that $k(n)>2n$ for all $n$. 
In $M_{k(n)}(\bbC)$ 
we have $n$ orthogonal rank 1 projections $p_{n,j}$, for $j\leq n$, each corresponding to a
unique state in $\cX\cup \cY$. Let $\cP$ be a maximal family of orthogonal rank 1 projections in $M_{k(n)}$ including $\{p_{n,j}: j\leq n\}$. 
Since $k(n)>2n$, we can find a permutation $\sigma$ of $\cP$ such that
\begin{enumerate}
\popcounter
\item $\sigma(p_{n,j})=p_{n,j}$ if and only if  $p_{n,j}$ corresponds to a pure state in $\cX$,
\item\label{I.UHF.crossed.5}  $\sigma(p_{n,j})\neq p_{n,k}$ if $p_{n,j}$ and $p_{n,k}$ correspond to distinct pure states in $\cY$, and 
\item $\sigma^2=\id_{\cP}$.    
\end{enumerate}
Let $u_n\in M_{k(n)}(\bbC)$ be an order 2 unitary such that $\Ad u_n (q)=\sigma(q)$ for all $q\in \cP$ and such that $\Tr(u_n) = 0$. (One can construct such a unitary by first considering a permutation matrix corresponding to $\sigma$, and noting that the number of $1$'s on the diagonal must be even; we then define $u_n$ to be a matrix obtained by starting out with this permutation matrix and replacing half of the $1$'s on the diagonal by $-1$'s.)
Note that the automorphism 
  $\beta:=\bigotimes_n \Ad u_n$ also satisfies $\beta^2=\id_A$. 
  
  Set $A_n$ as in Definition \ref{Def.Separated}. Each $A_n$ is $\beta$-invariant and we have $A\rtimes_\beta \bbZ/2\bbZ = \overline{\bigcup_n A_n\rtimes_{\beta|_{A_n}} \bbZ/2\bbZ}$. Note that $A_n\rtimes_{\beta|_{A_n}} \bbZ/2\bbZ \cong A_n \oplus A_n$, and the inclusion 
  \[
  A_n\rtimes_{\beta|_{A_n}} \bbZ/2\bbZ \to A_{n+1}\rtimes_{\beta|_{A_{n+1}}} \bbZ/2\bbZ \cong (A_n\rtimes_{\beta|_{A_n}} \bbZ/2\bbZ) \otimes M_{k(n)}
  \]
   is given by a direct sum of $k(n)/2$ copies of the identity map, and $k(n)/2$ copies of the map $a \oplus b \mapsto b \oplus a$. Thus, by considering the Bratteli diagram of this AF system, we see that $A \rtimes_{\beta} \bbZ/2\bbZ \cong M_{2^{\infty}}$.
  
By   \cite[Theorem~2]{AkeWe:Consistency}
a pure state $\varphi$ of $A$ has a unique extension to a pure state of $C$
 if and only if $\varphi$ and $\varphi\circ \beta$ are not unitarily equivalent. 
 By the choice of $u_n$ and $\beta$, a pure state $\varphi\in \cX\cup \cY$ has a unique 
 extension to a pure state of $C$ if and only if $\varphi\in \cX$. 
  If $\varphi$ and $\psi$ are distinct and  belong to $\cY$, then by  \eqref{I.UHF.crossed.5} 
for every finite-dimensional subalgebra $B$ of 
$C$  there exists a projection $p\in B'\cap C$ (one can choose it of the form $q + \sigma(q)$ for $q$ which corresponds to $\psi$)
  such that $\tilde\varphi(p)=0$ and $\tilde\psi(p)=1$. Therefore  \cite[Theorem~3.4]{Glimm:On} implies that  
$\tilde\varphi$   is not unitarily equivalent to $\tilde \psi$.

We now consider the case when $E$ is a nontrivial equivalence relation on $\cY$. 
Enumerate the $i$-th  $E$-equivalence class as $\langle \zeta_j^i: j<n\rangle$, for some  $1\leq n\leq \aleph_0$. 
In the above construction 
  there is sufficient room for us to 
  choose the symmetry $\sigma$ so the 
  resulting automorphism $\beta$ satisfies $\zeta_0^i \circ \beta=\zeta_1^i$ for all $i$.  
The resulting crossed product,~$A_1$, is isomorphic to $M_{2^\infty}$,
every $\zeta_j^i\in \cY$ has a unique extension $\tilde\zeta_j^i$ to a pure state of $A_1$, 
and $\tilde\zeta_j^i$ is equivalent to $\tilde\zeta_k^l$ if and only if $i=l$ and $\max(j,k)\leq 1$.  
We can now apply  this construction to $A_1$,  
with $\cX:=\emptyset$, 
$\cY:=\{\tilde \zeta_j^i: j\geq 1\}$ and $E$ defined by  
$\tilde \zeta_j^i E \tilde \zeta_k^l$ if and only if $i=l$ and $\min(j,k)\geq 1$
and obtain crossed product $A_2$. 
After at most $\aleph_0$ steps all $E$-equivalence classes will be taken 
care of. The inductive limit $C$ of $A_n$ is, by the classification of AF algebras, 
isomorphic to $M_{2^\infty}$  and it has all the required properties. 
\end{proof} 

The following lemma serves as the inductive step in our construction.

\begin{lemma} \label{L.onestep} 
Suppose $A$ is a non-type I, 
separable, simple, unital \cstar-algebra 
and let  $\cY$ be a countable set of pure states of $A$. Let $\zeta$ be a pure state of $A$ which is not unitarily equivalent to any of the states in $\cY$.
Suppose $\alpha$ is  an antiautomorphism,  or an outer automorphism, 
of $A$.
Then there exist a separable simple unital \cstar-algebra $C$ 
and a pure state $\psi$ of $C$ 
such that:
\begin{enumerate}
\item $A$ is a unital \cstar-subalgebra of $C$. 
\item Each $\varphi\in \cY$ has a unique extension to a pure state of $C$, and those unique extensions are pairwise unitarily inequivalent.
\item $\zeta$ has a unique extension to a pure state in $C$ which is unitarily equivalent to the extension of some pure state from $\cY$.
\item $\psi$ is the unique extension of some pure state in $\cY$. 
\item $\alpha$ cannot be extended to an antiautomorphism or an 
automorphism of $C$.
\item \label{I.onestep.4} 
If a \cstar-algebra $D$
has $C$ as a  subalgebra and $\psi$ has a unique state extension to~$D$ then   
  $\alpha$ cannot be extended to an antiautomorphism or an 
automorphism of~$D$. 
\pushcounter
\end{enumerate}
In addition, if $A\cong \cO_2$ then we can arrange $C\cong \cO_2$, and if $A \cong M_{2^{\infty}}$ then we can arrange $C \cong M_{2^{\infty}}$.
\end{lemma}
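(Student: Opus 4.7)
My plan is to apply Lemma~\ref{L.crossed} (or Lemma~\ref{L.UHF.crossed} in the UHF case) with an auxiliary ``witness'' pure state $\omega$ added to the set of states whose extensions will be unique, while its image $\omega\circ\alpha$ is placed in the set of states with multiple extensions. I shall pair $\omega$ under $E$ with the state $\varphi_*$ whose extension is $\psi$, so that in $C$ the pure states $\tilde\omega$ and $\psi$ become unitarily equivalent; the multiple extensions of $\omega\circ\alpha$ will then block any extension of $\alpha$ to any $D$ in which $\psi$ has a unique state extension.  Note that condition~(6) immediately implies condition~(5) by taking $D=C$, so I focus on~(6).

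First, Lemma~\ref{lemma:uncountable-inequivalent-states} applied to $\alpha$ produces $2^{\aleph_0}$ pure states $\omega$ with $\omega\not\sim\omega\circ\alpha$.  Since only countably many equivalence classes need to be avoided, I can choose $\omega$ so that both $\omega$ and $\omega\circ\alpha$ are unitarily inequivalent to every state in $\cY\cup\{\zeta\}$.  Assuming $\cY$ is nonempty (otherwise conditions~(3) and~(4) are vacuous), fix $\varphi_0,\varphi_*\in\cY$ and apply Lemma~\ref{L.crossed} (or~\ref{L.UHF.crossed}) with ``$\cY$'' replaced by $\cY\cup\{\zeta,\omega\}$, ``$\cX$'' replaced by $\{\omega\circ\alpha\}$, and with $E$ equal to the identity relation on $\cY$ together with the pairs $\zeta\mathrel{E}\varphi_0$ and $\omega\mathrel{E}\varphi_*$.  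Setting $\psi:=\tilde\varphi_*$ then verifies conditions~(1)--(4) directly from the conclusion of the lemma, and its ``in addition'' clause ensures $C\cong\cO_2$ or $C\cong M_{2^\infty}$ in the respective cases.

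For condition~(6), let $D\supseteq C$ be such that $\psi$ has a unique state extension $\hat\psi$ to $D$, and suppose for contradiction that $\tilde\alpha\colon D\to D$ is an (anti)automorphism extending $\alpha$.  Since $\omega\mathrel{E}\varphi_*$, there is a unitary $v\in C$ with $\tilde\omega=\psi\circ\Ad v$; then any state extension of $\tilde\omega$ to $D$, composed with $\Ad v^{-1}$, is a state extension of $\psi$, so $\tilde\omega$ has a unique state extension to $D$.  Because $\omega$ is pure, its set of state extensions to $C$ is a face of $\cS(C)$, and by Lemma~\ref{L.crossed} this face has the unique extreme point $\tilde\omega$, hence equals $\{\tilde\omega\}$; chaining gives that $\omega$ has a unique state extension to $D$.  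The map $\tau\mapsto\tau\circ\tilde\alpha$ is a bijection on states of $D$ (in the antiautomorphism case, positivity uses $\tilde\alpha(a^*a)=\tilde\alpha(a)\tilde\alpha(a)^*\ge 0$), and it restricts to a bijection between state extensions of $\omega$ and state extensions of $\omega\circ\alpha$ to $D$.  Hence $\omega\circ\alpha$ has a unique state extension to $D$.  However, by construction $\omega\circ\alpha$ has at least two distinct pure state extensions to $C$, and these extend to distinct states of $D$ by Hahn--Banach --- a contradiction.

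The main technical point of the plan is the upgrade from ``unique pure state extension'' (the direct output of Lemma~\ref{L.crossed}) to ``unique state extension'' (the hypothesis of condition~(6)); this is handled by the face argument in the previous paragraph.  The rest is bookkeeping, and the choice of $\omega$ via Lemma~\ref{lemma:uncountable-inequivalent-states} ensures the input data to Lemma~\ref{L.crossed} satisfy its disjointness and nonequivalence requirements.
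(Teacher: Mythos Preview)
Your proof is correct and follows essentially the same strategy as the paper: pick a witness pure state (your $\omega$, the paper's $\psi_0$) via Lemma~\ref{lemma:uncountable-inequivalent-states}, place $\omega\circ\alpha$ in $\cX$ and $\omega$ in the enlarged $\cY$, pair $\zeta$ and $\omega$ with elements of $\cY$ under $E$, and invoke Lemma~\ref{L.crossed} (or~\ref{L.UHF.crossed}). Your verification of condition~(6) is in fact more careful than the paper's: the paper asserts directly that ``$\tilde\psi\circ\tilde\alpha$ is the unique extension of $\psi_1$ to $D$,'' which tacitly uses the unitary equivalence between $\psi$ and $\tilde\psi_0$ together with a face/Krein--Milman argument to pass from ``unique pure extension'' to ``unique state extension''---exactly the steps you make explicit.
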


\begin{proof} 
Again, the proofs in the case in which  $\alpha$ is an outer automorphism 
and when $\alpha$ is an antiautomorphism differ very little and will be presented simultaneously.  
We note in passing that our assumptions imply that $A$ is nonabelian, hence 
 an automorphism of $A$ cannot 
be extended to an antiautomorphism of $C$ and vice versa; however this is unimportant for the proof. 
 
  Since the given set $\cY$ of pure states is countable, by Lemma \ref{lemma:uncountable-inequivalent-states}, we can choose a pure state $\psi_0$ such that
   for any $\varphi\in \cY \cup \{\zeta\}$,
 neither $\psi_0$ nor $\psi_1:=\psi_0\circ \alpha$
  is unitarily equivalent to $\varphi$.
  Let $\cY':=\cY\cup \{\zeta,\psi_0\}$, and define an equivalence relation 
  $E$ on $\cY'$ such that $\zeta\, E\, \varphi$ and $\psi_0\, E\, \varphi$ for some $\varphi\in \cY$, and all other elements of $\cY'$ are equivalent via~$E$ only to themselves. We then apply Lemma~\ref{L.crossed} or Lemma~\ref{L.UHF.crossed}  to $\cX=\{\psi_1\}$ 
   and $\cY'$  
  to obtain a  \cstar-algebra $C$ (with $C\cong A$ if $A$ is $M_{2^\infty}$ or $\cO_2$)  
  such that $\psi_0$, $\zeta$ and all  $\varphi\in \cY$ have unique pure state extensions to~$C$, 
    $\psi_1$ has 
  multiple state extensions to~$C$, and the unique extensions of $\psi_0$ and $\zeta$ are
  equivalent to the unique extension of some $\varphi\in \cY$; 
  the latter state is $\psi$ as in \eqref{I.onestep.4}.
  
  
Suppose $D$ is a \cstar-algebra that has $C$ as a \cstar-subalgebra, 
and assume that $\alpha$ extends to $\tilde\alpha$ which is an 
 automorphism or an antiautomorphism of $D$. 
 If $\psi$ has a unique state extension $\tilde \psi$ to $D$, then $\tilde\psi\circ\tilde \alpha$
 is the unique extension of $\psi_1$ to $D$. As $\psi_1$ has multiple state extensions to $C$ 
 this is a contradiction, and therefore \eqref{I.onestep.4} holds. 
 \end{proof} 

\section{Diamond and the construction} 
\label{section:diamond}

A subset $\cC$ of $\aleph_1$ is called \emph{closed and unbounded} (\emph{club})
if for every $\eta<\aleph_1$ there exists~$\xi \in \cC$ such that $\xi>\eta$, and 
for every countable $X\subseteq \cC$ we have $\sup(X)\in \cC$ (see \cite[\S III.6]{kunen2011set}).  
A subset $\cS$ of~$\aleph_1$ is \emph{stationary} if it intersects every club nontrivially. 
Since the intersection of two clubs (and even countably many clubs) 
is a club, the intersection of a stationary set with a club is again stationary.
We shall use von Neumann's~ definition of an ordinal as the set of all smaller ordinals.  
 
Jensen's $\doo$ asserts that  
there exists a family of sets $S_\xi$, for $\xi<\aleph_1$, such that 
\begin{enumerate}
\item $S_\xi\subseteq \xi$ for all $\xi<\aleph_1$,  and 
\item for every $X\subseteq \aleph_1$ the set $\{\xi: X\cap \xi=S_\xi\}$
is stationary. 
\end{enumerate}
This combinatorial principle is true in G\"odel's constructible universe $L$  
  (see e.g. \cite[\S III.7.13]{kunen2011set}) and 
  is therefore  
relatively consistent with ZFC. A much easier fact is that it 
implies the Continuum  Hypothesis (see e.g. \cite[III.7.2]{kunen2011set}). 
 
Although  $\doo$ captures subsets of $\aleph_1$,   
it is well-known among  logicians that $\doo$ implies its self-strengthening
which  captures countable (or separable) subsets of any algebraic structure in 
countable signature of cardinality $\aleph_1$. This extends to metric structures. Since we could not
 find a reference for this fact in the 
literature, we work out the details in case of \cstar-algebras equipped with some additional structure.

Suppose $A$ is a \cstar-algebra with a given sequence of states $\cY=\langle \varphi_j: j\in \bbN\rangle$ and 
a linear isometry $\alpha\colon A\to A$. (We are interested in the case when $\alpha$ 
is an automorphism or an antiautomorphism.)
Suppose we are given 
a dense subset of $A$, $\cA:=\{a_\xi: \xi<\theta\}$, indexed by an ordinal $\theta$.
In addition suppose that $\cA$ is closed under $+,\cdot, {}^*$,  $\alpha$, 
and multiplication by the complex rationals, $\bbQ+i\bbQ$.  
Consider the following subsets of~$\theta^k$, for $1\leq k\leq 3$ and of 
$\theta\times \bbQ$: 
\begin{enumerate}
\item $\cA(+):=\{(\xi,\eta, \mu)\in \theta^3: a_\xi+a_\eta=a_\mu\}$, 
\item $\cA(\cdot):=\{(\xi,\eta, \mu)\in \theta^3: a_\xi a_\eta=a_\mu\}$,  
\item $\cA(^*):=\{(\xi,\eta)\in \theta^2: a_\xi^* =a_\eta\}$, 
\item $\cA(\|\cdot\|):=\{(\xi, r)\in \theta\times \bbQ_+: \|a_\xi\|\geq r\}$, 
\item $\cA(\bbC):=\{(\xi,\eta) \in \theta^2: a_{\xi} = ia_{\eta}\}$, 
\item $\cA(\varphi_j):=\{(\xi, r)\in \theta\times \bbQ: 
\varphi_j(a_\xi^*a_\xi)\geq r\}$, for $j\in \bbN$, 
\item $\cA(\alpha):=\{(\xi, \eta)\in \theta^2: \alpha(a_\xi)=a_\eta\}$. 
\end{enumerate}
This countable family of  sets uniquely determines
a  countable  normed algebra over $\bbQ+i\bbQ$ 
 whose completion 
is isomorphic to $A$. It also uniquely determines 
both $\alpha$ and the sequence $\cY$. 
We say that the  structure 
$(A,\cA,\alpha,\varphi: \varphi\in \cY)$ 
is \emph{coded} by $\fX:=\langle \cA(\bullet): \bullet\in \{+, \cdot, {}^*, \|\cdot\| , \bbC, \alpha, \varphi: \varphi\in \cY\}\rangle$
and construe the latter as 
a subset of 
\[
\bbX(\theta):=\theta^3\sqcup \theta^3\sqcup\theta^2\sqcup \theta\times\bbQ 
\sqcup \theta^2
\sqcup \theta\times\bbQ\times \cY
\sqcup \theta^2. 
\]
Clearly  $\bbX(\theta)$ and $\theta$ have the same cardinality for any infinite $\theta$.

A nested transfinite sequence $A_\xi$, for $\xi<\aleph_1$, 
of \cstar-algebras is said to be \emph{continuous} if for every limit ordinal $\eta<\aleph_1$ we have 
$A_\eta=\overline{\bigcup_{\xi<\eta} A_\xi}$. 

\begin{lemma} \label{L.diamond} 
$\doo$ implies that  there exists a family 
 $\{T_\xi\}_{\xi<\aleph_1}$ such that: 
\begin{enumerate}
\item $T_\xi\subseteq \bbX(\xi)$ for all $\xi<\aleph_1$,  
\item \label{I.diamond.2} for every continuous nested family $\{A_\xi\}_{\xi<\aleph_1}$ 
of separable \cstar-algebras, for any enumeration $\{a_\xi| \xi<\aleph_1\}$ of 
$A=\underset{\longrightarrow}{\lim} A_\xi$, for any countable set $\cY$ of pure states of $A$ and  for any
linear isometry $\alpha$ of $A$ onto $A$,  the set 
of all $\theta<\aleph_1$ such that 
\begin{enumerate}
\item $\varphi\rs A_\theta$ is pure for all $\varphi\in \cY$, 
\item $\alpha(A_\theta)=A_\theta$, and 
\item $T_\theta$ codes the structure 
$(A_\theta, \{a_\xi: \xi<\theta\},\alpha\rs{A_\theta},\varphi\rs{A_\theta}: \varphi\in \cY)$ 
\end{enumerate}
\end{enumerate}
is stationary. 
\end{lemma}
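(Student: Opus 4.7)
The plan is to derive the strengthened diamond from the ordinary $\doo$ on subsets of $\aleph_1$ by transferring it via a suitably \emph{levelled} bijection. Since $|\bbX(\xi)|=\max(|\xi|,\aleph_0)$ for every $\xi\leq\aleph_1$, a straightforward transfinite recursion produces a bijection $\pi\colon\aleph_1\to\bbX(\aleph_1)$ for which the set $\cC_0:=\{\xi<\aleph_1:\pi[\xi]=\bbX(\xi)\}$ contains a club (at stage $\xi$, biject the next countable block of $\aleph_1\setminus\xi$ onto the next countable increment of $\bbX(\aleph_1)\setminus\bbX(\xi)$). Fixing a $\doo$-sequence $\langle S_\xi:\xi<\aleph_1\rangle$, I would define $T_\xi:=\pi[S_\xi]$ for $\xi\in\cC_0$ and $T_\xi:=\emptyset$ otherwise; then $T_\xi\subseteq\bbX(\xi)$, which establishes (1).

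For (2), fix a continuous nested family $\{A_\xi\}$, an enumeration $\{a_\xi:\xi<\aleph_1\}$ of a dense subset of $A$, a countable set $\cY$ of pure states of $A$, and a linear isometry $\alpha$; form the code $\fX\subseteq\bbX(\aleph_1)$ of $(A,\{a_\xi\},\alpha,\varphi:\varphi\in\cY)$ and put $X:=\pi^{-1}[\fX]$. By $\doo$ applied to $X$, the set $\cS:=\{\xi\in\cC_0:S_\xi=X\cap\xi\}$ is stationary. It then suffices to show that the set of $\theta<\aleph_1$ satisfying (C1) $\{a_\xi:\xi<\theta\}$ is norm-dense in $A_\theta$ and closed under $+,\cdot,{}^*$, $\alpha$ and $(\bbQ+i\bbQ)$-scalar multiplication, (C2) $\alpha(A_\theta)=A_\theta$, and (C3) $\varphi\rs{A_\theta}$ is a pure state of $A_\theta$ for every $\varphi\in\cY$, contains a club $\cC$. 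Granted this, for any $\theta\in\cS\cap\cC$ we have $T_\theta=\pi[X\cap\theta]=\fX\cap\bbX(\theta)$, and by (C1)--(C3) this is precisely the code of the restricted structure $(A_\theta,\{a_\xi:\xi<\theta\},\alpha\rs{A_\theta},\varphi\rs{A_\theta}:\varphi\in\cY)$.

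Clubhood of (C1) is the standard fact that the set of countable ordinals closed under countably many finitary (Skolem) functions is a club, applied to $+,\cdot,{}^*,\alpha$, scalar multiplication by elements of the countable set $\bbQ+i\bbQ$, and a countable family of Skolem functions witnessing density of $\{a_\xi\}$ in the separable algebras $A_\xi$; (C2) then follows from (C1). The main obstacle is (C3), which asserts that a pure state of $A$ restricts to a pure state on ``most'' separable subalgebras. My plan is to handle it through the excision characterization of pure states: for each $\varphi\in\cY$, fix a Skolem function $F_\varphi$ that, given a finite tuple $\vec a\in A^{<\omega}$ and a rational $\e>0$, returns a positive norm-one $h\in A$ with $\varphi(h)=1$ and $\|hxh-\varphi(x)h^2\|<\e$ for every entry $x$ of $\vec a$; such $h$ exists by \cite[Proposition~2.2]{AkeAndPed}. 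Adding closure of $\{a_\xi:\xi<\theta\}$ under each $F_\varphi$ is still a countable closure condition, hence cuts out a club; at any such $\theta$ the restriction $\varphi\rs{A_\theta}$ admits excising elements drawn from $\{a_\xi:\xi<\theta\}$ (a short continuity argument extends the excision estimates from this dense $*$-subalgebra to all of $A_\theta$), and therefore $\varphi\rs{A_\theta}$ is pure on $A_\theta$ by the same characterization applied to $A_\theta$. Intersecting with $\cC_0$ completes the proof.
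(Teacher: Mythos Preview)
Your proposal is correct and follows the paper's approach: pull $\doo$ back through a bijection $\aleph_1\to\bbX(\aleph_1)$ whose set of level-preserving ordinals is a club, then intersect the guessing set with a club on which $\{a_\xi:\xi<\theta\}$ is a dense $\alpha$-closed $(\bbQ+i\bbQ)$-subalgebra of $A_\theta$ and each $\varphi\rs{A_\theta}$ is pure. The only difference is that for (C3) the paper simply invokes \cite[Lemma~4]{AkeWe:Consistency}, whereas you sketch its proof via the Akemann--Anderson--Pedersen excision characterization of pure states.
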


\begin{proof} 
Fix a   bijection 
$f\colon \aleph_1\to \bbX(\aleph_1)$. 
Writing $f[X]:=\{f(x): x\in X\}$, define $g\colon \aleph_1\to \aleph_1$ by 
$g(\xi):=\min\{\eta: f[\xi]\subseteq \bbX(\eta), f^{-1}[\bbX(\xi)]\subseteq \eta\}$. 
(Since every countable subset of $\aleph_1$ is bounded, $g$ is well-defined.)
The set of fixed points of $g$, $\cC:=\{\theta<\aleph_1: g[\theta]=\theta\}$, 
is a club (\cite[Lemma~III.6.13]{kunen2011set})
and $\cC\subseteq \{\theta<\aleph_1: f[\theta]=\bbX(\theta)\}$. 
Let $\{S_{\xi}\}_{\xi < \aleph_1}$ be a family of sets as in the definition of $\doo$.
We claim that   $T_\xi:=f[S_\xi]$, for $\xi\in \cC$, and $T_\xi:=\emptyset$, for $\xi\notin \cC$, 
 are as required. 
(Many of the $T_\xi$ don't code anything resembling a \cstar-algebra, but this is of no concern for us.)

Suppose $A=\underset{\longrightarrow}{\lim} A_\xi$, $\cY$, $\alpha$,   and $\{a_\xi: \xi<\aleph_1\}$ 
are as in \eqref{I.diamond.2}. Set $\cA_\theta:=\{a_\xi: \xi<\theta\}$. 
Note that the set
\[
\cC_0:=\{\theta<\aleph_1: \cA_\theta\text{ is a dense $\bbQ+i\bbQ$ subalgebra of $A_\theta$}\}
\]
is a club. 
Since the intersection 
of countably many clubs is a club,   \cite[Lemma~4]{AkeWe:Consistency} implies that 
\[ \cC_1:=\{\theta\in \cC_0: \varphi_j\rs{A_\theta} \; \text{is pure 
for all $j\in \bbN$ and} \;  \alpha[A_\theta]=A_\theta\}
\]
is also a club. Let $\fX\subseteq \bbX(\aleph_1)$ be the code of 
 $(A, \cA,\alpha,\varphi: \varphi\in \cY)$ and with $f$ used to define $T_\xi$, 
 let $X:=f^{-1}(\fX)$. By $\doo$, the set $\{\theta : X \cap \theta = S_{\theta}\}$ is stationary, and therefore so is 
  its intersection with $\cC_1$. But $\{\theta : X \cap \theta = S_{\theta}\} \cap \cC_1$ is precisely the set of ordinals $\theta$ which satisfy (\ref{I.diamond.2}), as required.
 \end{proof}

\begin{proof}[Proof of Theorem~\ref{T.O2+}]  
We construct a continuous 
nested sequence $\{A_{\eta} : \eta < \aleph_1\}$ of simple, separable unital and nuclear \cstar-algebras 
and inequivalent 
pure states $\varphi_\eta^j$, for $j<n$, of $A_\eta$, such that $\varphi_\eta^j$ and $\varphi_\xi^j$ agree on $A_\xi$ 
if $\xi<\eta$. 
Since $\doo$ implies the Continuum Hypothesis, each $A_\eta$ as well as $\bigcup_{\eta<\aleph_1} A_\eta$
will be of cardinality $\aleph_1$. We shall choose an enumeration $A_\eta=\{b_\eta^\xi: \xi<\aleph_1\}$
for every  $\eta$ and a countable dense subset  
$\cA_\eta=\{a_\xi^\eta: \xi<\eta\}$ of $A_\eta$ for every limit ordinal $\eta$ such that 
  \begin{enumerate}
\item \label{I.1} $\cA_\eta$ is  closed under $+,\cdot, {}^*$,  and multiplication by the complex 
  rationals, $\bbQ+i\bbQ$, 
    \item \label{I.2} $a_\xi^\zeta=a_\xi^\eta$ if $\xi<\zeta<\eta$
    and   $\zeta$ and $\eta$  are   limit ordinals, 
  \item \label{I.3} $\{b_\zeta^\xi : \max\{\xi,\zeta\}<\eta\} \subseteq \cA_\eta$. 
  \end{enumerate} 
We begin with $A_0 = \cO_2$ or $A_0 = M_{2^{\infty}}$ and any fixed (finite or infinite) sequence $\langle \varphi_0^j : j < n\rangle$ of  inequivalent pure states of $A_0$.  

If $\theta$ is a limit ordinal then we let $A_\theta := \lim_{\xi<\theta} A_\xi$ and let $\varphi_\theta^j$ be the unique state extending 
all $\varphi_\xi^j$ for $\xi<\theta$ for $j<n$; this state is necessarily pure. If in addition $\theta$ is a limit of limit ordinals, then 
$\cA_\theta$ is already uniquely determined and conditions \eqref{I.2} and \eqref{I.3} for $\zeta<\eta<\theta$ 
imply the corresponding conditions for $\eta<\theta$. 
If $\theta$ is a limit ordinal, but not a limit of limit ordinals, then the supremum of limit ordinals $<\theta$ is the largest 
limit ordinal below $\theta$; we denote it by $\eta$. Then the set  $\{\xi: \eta\leq \xi<\theta\}$ is infinite.  
Since $A_\theta$ is separable and the set on the left-hand side of \eqref{I.3} is countable, 
 $\cA_\theta$ can be defined so that it satisfies the requirements. 

Now suppose $\theta$ is a successor ordinal, say $\theta=\xi+1$.  
To proceed from $A_\xi$ to $A_{\xi+1}$, we first check whether there exists an 
outer automorphism or an antiautomorphism 
 $\alpha$ of~$A_\xi$, pure state $\psi$ of $A_\xi$, and (if $n$ is finite) an extension 
 of $\langle \varphi_\xi^j:j<n\rangle$ to an infinite sequence~$\cZ$    such that 
$(A_\xi, \cA_\xi, \psi^\frown \cZ, \alpha)$ is coded by $T_\xi$. 
If so, let $A_{\xi+1}$ be the \cstar-algebra $C$ 
given by 
Lemma~\ref{L.onestep} in which the unique extension of $\psi$ is unitarily equivalent to a unique extension of some $\varphi_\xi^j$. 
Let $\varphi_{\xi+1}^j$ be the  unique extension of $\varphi_\xi^j$, for $j<n$. 
If $T_\xi$ does not code 
such $(A_\xi, \cA_\xi, \psi^\frown \cZ, \alpha)$,  let $A_{\xi+1}:=A_{\xi}$.  
This describes the construction. 

Let~$A$ be the inductive limit of this nested sequence. It is nuclear, simple and unital, being the inductive limit of 
simple nuclear \cstar-algebras with unital connecting maps.
 Using \eqref{I.2} we can  write $a_{\xi} := a_\xi^\zeta$ 
for $\zeta$ being any limit ordinal greater than $\xi$. 
 Since $A=\bigcup_\xi A_\xi$ by \eqref{I.3} we have  $A= \{a_\xi: \xi<\aleph_1\}$.

 The sequence of pure state extensions $\varphi_{\theta}^j$ defines 
  $n$ inequivalent pure states $\varphi^j$, for $j<n$, of $A$.
  These states have the property that $\varphi^j$ is a unique extension of $\varphi_\theta^j$ 
  to $A$, for every $\theta<\aleph_1$. 
     If $n$ is finite let $\cZ$ be any infinite sequence of pure 
     states of $A$ extending $\langle \varphi^j: j<n\rangle$. 

Suppose $A_0\cong \cO_2$ and  $A_\xi\cong \cO_2$ for all $\xi<\theta$. If $\theta=\xi+1$ then $A_\theta\cong \cO_2$ 
 since it was obtained by using Lemma~\ref{L.onestep}. 
 If $\theta$ is a limit ordinal  then \cite[Corollary~5.1.5]{Ror:Classification} implies $A_\theta\cong \cO_2$. 
 Therefore by induction $A_\xi\cong \cO_2$ for all $\xi<\aleph_1$. 
 Likewise, if $A_\xi\cong M_{2^{\infty}}$ for all $\xi<\theta$ then 
   $A_\theta\cong  M_{2^{\infty}}$ by the classification of AF algebras (noting that the inclusion maps all induce an isomorphism on the $K_0$ groups).
Since $A$ has density character $\aleph_1$, it is an inductive limit of full matrix algebras
by \cite[Theorem~1.3 (1)]{farah-katsura-I}.

  Suppose that $A$ has an 
 antiautomorphism or an outer automorphism 
 $\alpha$ and let $\varphi$ be any pure state of $A$. 
 Then there exists $\theta<\aleph_1$ such that  
 $(A_\theta,\cA_\theta,\varphi^\frown \cZ, \alpha\rs {A_\theta})$ was coded by $T_\theta$ 
  at stage $\theta$. 
 Hence $A_{\theta+1}$ was produced by using Lemma~\ref{L.onestep} 
 and there exists $j<n$  such that 
$\alpha\rs{A_{\theta}}$ cannot be  extended to an 
antiautomorphism or  an outer automorphism of any \cstar-algebra which contains $A_{\xi+1}$ and
to which $\varphi_{\xi+1}^j$ has a unique state extension. 
By  construction this state has a unique extension to $A_\eta$ for all $\eta\geq \xi+1$ and 
therefore it has a unique extension to $A$. 
But $\alpha$ clearly extends $\alpha\rs{A_\theta}$; contradiction. 

We already know that  $A$ has at least $n$ inequivalent pure states. 
Let  $\psi$ be any pure state of $A$. With $\alpha=\id_A$, 
 there exists $\theta<\aleph_1$ such that 
 $(A_\theta, \cA_\theta,\alpha\rs{A_\theta},\varphi\rs{A_\theta})$ was coded by $T_\theta$ 
  at stage $\theta$. 
 Hence $A_{\theta+1}$ was produced by using Lemma~\ref{L.onestep}
and $\varphi\rs {A_{\theta}}$ has a unique extension to $A_{\theta+1}$ equivalent to 
$\varphi_{\theta+1}^j$ for some $j<n$. Since $\varphi^j$ is the unique extension
of the latter to a state of $A$, we conclude that $\psi$ is equivalent to $\varphi^j$. 
Since $\psi$ was arbitrary, we conclude that every pure state of $A$ is equivalent to some $\varphi^j$, for $j<n$, and therefore $A$ has exactly $n$ inequivalent pure states. 
 \end{proof}     
\begin{remark}
The AF algebra we constructed is not isomorphic to an (uncountable) infinite tensor power of copies of $M_2$ (or $M_n$). To see that, notice that an infinite tensor product of matrix algebras is the complexification of a real $C^*$-algebra (namely, the corresponding infinite tensor product of $M_2(\bbR)$). A complexification of a real $C^*$-algebra is always isomorphic to its opposite (any real $C^*$-algebra is isomorphic to its opposite via the $*$ map, which is $\bbR$-linear, which one can then complexify).
\end{remark}
\begin{remark}
Our construction is $C^*$-algebraic in nature. It does, however, raise the analogous question for von-Neumann algebras: is there a hyperfinite factor (with non-separable predual) which is not isomorphic to its opposite?
More concretely, our AF example has unique trace. Let $M$ be the weak closure of its image under the GNS representation. Is $M$ isomorphic to its opposite? A peculiar hyperfinite II$_1$ factor with no nontrivial
central sequences was constructed using the Continuum Hypothesis in \cite{FaHaKaTi}. 
  \end{remark}

\bibliographystyle{plain}
\bibliography{opposite}
\end{document}